\def\R{\mathbb R}
\def\p{\partial}
\def\P{\mathbb P}
\def\E{\mathbb E}
\def\bigO{\mathcal O}
\renewcommand{\vec}[1]{\boldsymbol{#1}}
\newtheorem{theorem}{Theorem}[section]
\newtheorem*{theorem*}{Theorem}
\newtheorem{proposition}[theorem]{Proposition}
\newtheorem{lemma}[theorem]{Lemma}
\newtheorem*{corollary*}{Corollary}
\newtheorem*{conjecture*}{Conjecture}
\theoremstyle{remark}
\newtheorem{remark}{Remark}
\definecolor{light}{gray}{0.8}
\author{}
\begin{document}

\title{Markov-modulated M/G/1 type queue in heavy traffic and its application to
time-sharing disciplines}

\author{H. Thorsdottir$\,^a$ and I. M. Verloop$\,^{b,c}$}

\date{\today}

\maketitle

\begin{abstract}
\noindent 
This paper deals with a single-server queue with modulated arrivals, service requirements and service capacity. In our first result, we derive the mean of the total workload assuming generally distributed service requirements and any service discipline which does not depend on the modulating environment. We then show that the workload is exponentially distributed under heavy-traffic scaling. In our second result, we focus on the discriminatory processor sharing (DPS) discipline. Assuming exponential, class-dependent service requirements, we show that the joint distribution of the queue lengths of different customer classes under DPS undergoes a state-space collapse when subject to heavy-traffic scaling. That is, the limiting distribution of the queue length vector is shown to be  exponential, times a deterministic vector. The distribution of the scaled workload, as derived for general service disciplines, is a key quantity in the proof of the state-space collapse.
\vspace{3mm}

\noindent {\bf Keywords:} Markov-modulation $\star$ heavy traffic $\star$
discriminatory processor sharing $\star$
single-server queue

\vspace{2mm}

\noindent {\bf Mathematics Subject Classification:} 60K25, 60K37

\vspace{3mm}

\noindent $^a$ CWI, Science Park 123, 1098XG Amsterdam, the Netherlands.\\
$^b$ CNRS, IRIT, 2 rue C. Camichel, 31071 Toulouse, France.\\
$^c$ Universit\`e de Toulouse, INP, 31071 Toulouse, France.\\
e-mail: halldora@cwi.nl, verloop@irit.fr

\vspace{3mm}

\noindent{{\bf Acknowledgements:} The authors would like to thank Urtzi Ayesta, Joke Blom and Michel Mandjes for helpful discussions. This research was partially supported by the SMI program of INP Toulouse.}

\end{abstract}

\section{Introduction}
Markov-modulation is a way to formalize the embedding of queues in a random environment. The parameters of the queue in question, typically arrival rates, service requirements or both, are governed by an external Markov chain, thereby creating an extra layer of randomness around the stochastic queueing process.
For classical results on Markov-modulated single-server queues with the first-come-first-serve (FCFS) discipline see e.g. \cite{Neuts81, Purdue, Regter}. Recent work on systems in a Markov-modulated environment can be found in for example~\cite{Fiems13, Kim15, Zorine14}.

In this paper we will analyse a modulated queue under a heavy-traffic scaling, that is, evaluate the system at its critical load. 
It is a well-known result from the literature of single-server queues, that under fairly general conditions~\cite{Kingman}, the steady-state distributions of the appropriately scaled queue length and workload become exponential when the critical load is approached. 
This property has been seen to carry over to certain systems where arrivals and service times are modulated by an external Markov process, see \cite{Asm87, Dimi11, FalinFalin}. In fact, \cite{Asm87} establishes an even stronger result: convergence of the queue length \emph{process} to a reflected Brownian motion.
\emph{Multi-class single-server queues} 
under a heavy-traffic scaling have been studied in e.g.\ \cite{Katsuda} for FCFS with feedback routing, \cite{Ayesta11} for the discriminatory random order of service  discipline, and \cite{Grishechkin92,  Verloop11} for the discriminatory processor sharing (DPS) policy. In particular, \cite{Ayesta11, Verloop11}  show that the steady-state queue length vector undergoes a so-called \emph{state-space collapse} and  converges to an exponentially distributed variable, times a deterministic vector. 
The cited multi-class results under heavy-traffic scaling are all for non-modulated systems. 
In light of this, we will in this paper put special emphasis on a modulated multi-class single-server queue, and the limiting steady-state queue length distribution is derived.

While there is little ambiguity in how arrival rates are modulated,
there are in the literature typically two ways in which to modulate the service rates. One can (i) let the departure rate be {\it continuously} modulated throughout a customer's service, the other approach is to (ii) let a customer's service requirement distribution be based on the state of the environment when it arrives and remain the same until the customer departs. We note that by adapting the number of different customer classes, the fixed service requirements of case (ii) can be seen as a special case of the continuously modulated requirements (i); we further elaborate on this later in the paper in Remark \ref{rem:contVSfixed} in Section \ref{sec:model}.

 The way the load or traffic intensity for modulated queues is defined goes hand in hand with the way the service rates 
 are affected by the environment. 
In case (i), the load is typically the average of arrival rates (where the averaging is with respect to the equilibrium distribution of the environment), say $\lambda_\infty$, divided by the average of service rates, say $\mu_\infty$ (see e.g. \cite{Neuts81}). 
In case (ii), the load is taken as the average over the arrival rate times the mean service requirement, say, $\lambda_d / \mu_d$ per state $d$ of the environment (see e.g. \cite{Dimi11,Regter}).
The two load definitions represent different scenarios. In particular, when load (ii) is equal to 1 (the critical load) it means that in at least one state of the environment, the total load over all classes must exceed 1, i.e. for at least one state we must have overload. This is true only for special cases of definition (i).

In this paper, special focus will be given to a multi-class single-server queue under the DPS discipline.
The DPS discipline was first introduced by Kleinrock in \cite{Klein67} as an extension of the well-known egalitarian processor sharing (PS) discipline and has turned out to be very suitable to model the simultaneous parsing of diverse tasks, such as processing network data.
Under this service discipline, the service capacity is divided between all present customers in proportion to their prescribed weights.
Due to the challenging nature of DPS systems, most available results are in terms of limit theorems and moments. 
Fayolle et al. \cite{Fayolle80} established the mean sojourn time conditioned on the service requirement. That analysis also yielded the mean queue lengths of the different classes, which were shown to depend on the entire service requirement distributions of all classes. 
The DPS model has finite mean queue lengths irrespective of any higher-order characteristics of the service distribution, see Avrachenkov et al. \cite{AVRA}. This is an extension of a result for the Processor Sharing (PS) system, which holds while the queue is stable.
DPS under a heavy-traffic regime was analysed in \cite{Grishechkin92} assuming finite second moments of the service requirement distributions. Assuming exponential service requirement distributions, a direct approach to establish a heavy-traffic limit for the joint queue length distribution was described in \cite{Rege96} and extended to phase-type distributions in \cite{Verloop11}. Combining light and heavy-traffic limits, in \cite{Izagirre15} an interpolation approximation is derived for the steady-state distribution of the queue length and waiting time of DPS. The performance of DPS in overload is considered in \cite{Altman04}. Asymptotics of the sojourn time have received attention in \cite{Borst05, Borst06}. Game-theoretic aspects of DPS have been studied in \cite{Wu12, Hassin03}. A thorough overview of DPS results can be found in \cite{Altman}. 

We are not aware of work analysing a DPS system under modulation. We refer to \cite{Nunez00} where the Processor Sharing discipline (DPS discipline with unit weights) was analysed in a Markovian random environment. Multi-class queues in a random environment have been studied for different models in
\cite{Budh14, Takine05}. In \cite{Takine05}, 
a modulated system 
is studied where arrivals can only occur at transition epochs of the modulating process but service requirements are class-dependent and generally distributed. 
Using a time-changing argument, the waiting time distribution is derived under the FCFS discipline. In \cite{Budh14}, the authors derive a Brownian control problem to establish a form of the $c\mu$ scheduling rule in heavy traffic under continuously modulated service requirements. By using a particular scaling, the time-scale separation of the external environment and the queue length process is exploited. Similar scaling of a modulated queue can also be seen in results on the Markov-modulated infinite server queue in e.g.\ \cite{KELL}.

The system we analyse in this paper is a single-server queue where the arrival rates, service requirements and service capacity are modulated.
We focus on the setting where a customer's service requirement distribution is based on the state of the environment when it arrives and does not change throughout its service. The service capacity is however continuously modulated. This assumption is in line with the literature for various types of modulated queues, see \cite{boxma2001,Dorsman, Mahab05, Takine05}. In Remark \ref{rem:split} in Section \ref{sec:DPSproof}, we discuss how part of our results can be extended to a more general model with continuously modulated service requirements.  We derive the distribution of the workload under a heavy-traffic scaling for generally distributed service requirements and any service discipline which does not depend on the environment. 
We then turn our attention to the DPS discipline in a multi-class queue, which is a particular case of the general modulated system as described above. 
The weights of the DPS system determining the service proportion, depend on a customer's class and do not change with the environment, which means that the workload result remains valid. 
An important finding in the present paper is that the queue length vector under DPS becomes independent of the modulating process in the heavy-traffic limit, which is consistent with the modulated M/G/1 queue in e.g.
\cite{Asm87,Dimi11}. This, together with the obtained result on the workload, allows us to derive the distribution of the queue length vector under DPS and to show that it undergoes a state-space collapse. 

The remainder of the paper is organized as follows. In Section \ref{sec:model} we describe the model. In Section \ref{sec:workload} we study the workload of a single-server queue with modulated arrivals, service capacity and service requirement distribution, establishing its distribution in heavy traffic. From Section \ref{sec:queuelength} onwards, the focus is on DPS. In Section \ref{sec:queuelength} we derive some basic properties of the queue length distribution, obtain a rate conservation law and derive an equation for the moments of the queue lengths weighted with the modulated service capacity. Section~\ref{sec:HT} is devoted to the heavy-traffic scaling; there we show that the distribution of the environment becomes independent of that of the queue length vector, in addition to deriving two technical lemmas. The exponential limiting distribution of the joint queue length in heavy traffic follows in Section \ref{sec:DPSproof}. The result is shown in two steps in the subsections \ref{sec:SSC} about the state-space collapse and \ref{sec:commonfactor} about the exact limiting distribution, where we rely on the workload result of Section \ref{sec:workload}. We conclude with a summary and some open questions in Section \ref{sec:conclusion}.

\section{Model}\label{sec:model}
We analyse a single-server queue modulated by an independent external environment, which is formalized by an irreducible continuous time Markov chain on a finite state-space $\{1,\ldots,D\}$. The modulating process is denoted with $Z$ and is governed by an infinitesimal generator matrix $Q = \left(q_{d\ell}\right)_{d,\ell=1}^D$ with an invariant distribution $\vec{\pi}=(\pi_1,\ldots,\pi_D)$. 
In what follows, vectors are generally denoted in bold.
New customers arrive according to a Poisson distribution with rate $\lambda_d$ when the environment is in state $d$. A customer arriving in state $d$ has a service requirement distribution given by a function $H_d(\cdot)$ with Laplace-Stieltjes transform (LST) $h_d(\cdot)$. The service requirement does not change further with the environment. The first and second moment are given by $h_{d1}$ and $h_{d2}$, respectively.
In addition we let the service capacity be scaled by a factor $c_d$ during the environment's stay in state $d$, this can thus change during the service of the customers.
The traffic intensity will be measured as 
\begin{equation}\label{eq:rho_infty}
 \rho_\infty = \sum_d \pi_d \lambda_d h_{d1}/c_\infty, 
\end{equation}
with $c_\infty := \sum_d \pi_d c_d$ being the service capacity averaged over the environment. The workload is defined as the time it takes to empty the system at an arbitrary moment in time given the observed environment and is denoted by~$W$.
In Section~\ref{sec:workload} we study the workload and the environment as a two-dimensional process $(W,Z)$, under any service discipline that is independent of the environment.

The first main result of this paper concerns the distribution of the workload
when the traffic intensity approaches its critical point. The system is said to be in heavy traffic when $\rho_\infty$ approaches 1.
Let $N>0$ and define the following parametrization
\begin{equation}\label{eq:HTlambda}
 \lambda^{(N)}_d := \frac{\lambda_d}{\rho_\infty}(1-1/N)
 \to \frac{\lambda_d}{\rho_\infty} =: \hat{\lambda}_d, \quad\text{as } N\to\infty,
\end{equation}
where $\rho_\infty$ is based on the unscaled parameters. 
Prelimit quantities will be denoted with a superscript $(N)$; the prelimit traffic intensity is thus $\rho_\infty^{(N)}$ and is equal to $1-1/N$. 
Limiting quantities will have a \^{}; in heavy traffic the traffic intensity is denoted $\hat{\rho}_\infty$ and is equal to 1.

In the remainder of the paper, starting in Section \ref{sec:queuelength}, we analyse a single-server queue with $K$ customer classes under the discriminatory processor sharing policy; the queue is again embedded in a random environment. 
Let $\alpha_{k,d}$ be the probability that a customer, that arrives while the environment is in state $d$, is of class $k$; note that $\sum_k \alpha_{k,d}=1$ for a given $d$. 
The Poisson arrival rate of a class-$k$ customer is denoted with $\lambda_{k,d} := \alpha_{k,d}\lambda_d$ and for each class $k$ it is assumed that $\lambda_{k,d}>0$ for at least one state $d$. 
In the multi-class setting we assume that a class-$k$ customer has an exponentially distributed service requirement with mean $1/\mu_k$. 
We believe that the results obtained in this paper can be extended to phase-type distributed service requirements, the latter being dense in the space of all distributions on $[0,\infty)$. For the non-modulated DPS queue, the phase-type analysis was performed in~\cite{Verloop11} using similar proof techniques. For ease of exposition, however, we focus here on the exponential case.

We no longer let the service requirement of a particular customer be environment-dependent (although the distribution of an arbitrary customer is, as explained in Section \ref{sec:commonfactor}).
One can however retrieve the environment-dependent service requirements by introducing additional classes for each environment, see Remark \ref{rem:contVSfixed}.
By referring to a class-$k$ customer's service rate while in state $d$ as $\mu_{k,d} := \mu_k c_d$, we take the modulated service capacity into account.
Most of the results for the queue length can in fact be shown without assuming this product form, representing a system where the service requirements are continuously modulated, see Remark \ref{rem:split}, Section \ref{sec:SSC}, for further details.

We denote the average arrival rate of class-$k$ customers by $\lambda_{k,\infty} := \sum_d \lambda_{k,d}\pi_d$, similarly we denote the average service rate for class $k$ by $\mu_{k,\infty} := \sum_d \mu_{k,d}\pi_d = \mu_k c_\infty$ and $\rho_{k,\infty} := \lambda_{k,\infty} / \mu_{k,\infty}$.
The aggregate traffic intensity for the multi-class model is defined as
\[
 \rho_\infty := \sum_{k=1}^K \rho_{k,\infty},
\]
which is consistent with the definition in Eqn. \eqref{eq:rho_infty}. 

Let the state of the multi-class system be described by the vector of random variables $(M_1,\ldots, M_K,Z) =: (\vec{M},Z)$, where $M_k$ is the number of class-$k$ customers, for $k=1,\ldots,K$. As before, $Z$ represents the state of the background process.
In a DPS system, the random fraction of service given to a class-$k$ customer is
\[
 \frac{g_k}{\sum_j g_j M_j},
\]
where 
$g_k$ are weight parameters associated with each class $k$.\\

When in heavy traffic, we denote $\hat{\rho}_{k,\infty} := \hat{\lambda}_{k,\infty} / \mu_{k,\infty}$ and thus have for the multi-class system 
\[
 \sum_k \hat{\rho}_{k,\infty} = \sum_k \frac{\hat{\lambda}_{k,\infty}}{\mu_{k,\infty}} = \frac{1}{\rho_\infty}\sum_k\frac{\lambda_{k,\infty}}{\mu_{k,\infty}} = \frac{1}{\rho_\infty}\sum_k \rho_{k,\infty} = 1.
\]

\begin{remark}[Modulated service requirements rewritten to classes]\label{rem:contVSfixed}
 Any multi-class system where the service requirement distribution is determined by the modulating process at a customer's arrival,
 can be written as a multi-class model with non-modulated, only class-dependent, service rates $\mu_k$, as illustrated below:
While in state $d$ of the environment, class-$k$ customers arrive with rate $\lambda_{k,d}$ 
and 
have exponential service requirement with mean $1/\mu_{k,d}$
and weight $g_k$. 
Such customers we refer to as being of class $(k,d)$ and count with $M_{k,d}$, hence we need to keep track of $K\cdot D$ different customer ``classes". Arrivals to class $(k,d)$ are inactive while not in state $d$. 
    \begin{table}[h]
    \centering
 \begin{tabular}{ | l || c | c | c | c |} \hline 
   classes    & \multicolumn{2}{ c| }{arrival rates} & serv. rate & weight \\ \hline
       & $d=1$ & $d=2$ & \multicolumn{2}{ c| }{$d\in\{1,2\}$} \\ \hline
 (1,1) & $\lambda_{1,1}$ & 0 & $\mu_{1,1}$ & $g_1$ \\ \hline 
 (1,2) & 0 & $\lambda_{1,2}$ & $\mu_{1,2}$ & $g_1$ \\ \hline
 (2,1) & $\lambda_{2,1}$ & 0 & $\mu_{2,1}$ & $g_2$ \\ \hline 
 (2,2) & 0 & $\lambda_{2,2}$ & $\mu_{2,2}$ & $g_2$ \\ \hline 
\end{tabular}
 
 \caption{A multi-class system where the service requirement distribution is fixed upon arrival can be translated into one with non-modulated service requirements.}
 \label{tab:remark1}
 \end{table}
 
 From Table \ref{tab:remark1} one can easily see how a $K=D=2$ system can be written into one
  with $K=4$ and $D=2$. 
 The arrival rates are still modulated, but in an on-off way.
The service rates $\mu_{k,d}$ are now non-modulated.
 \end{remark}

 \section{Workload}\label{sec:workload}
In this section we consider the workload in a modulated queue and extend the results for an M/G/1 type queue from Falin and Falin \cite{FalinFalin} and Dimitrov \cite{Dimi11} to include modulated service capacity.
We derive the mean of the workload and then its distribution in the heavy-traffic regime.

Let $p_{0,d} = P(W = 0,Z=d)$ 
and let $\vec{a} = (a_1,\ldots,a_D)^T$ be a vector solving
\begin{equation}\label{eq:Qa}
 [Q\cdot \vec{a}]_d 
 =   c_d - \lambda_d h_{d1} -c_\infty(1-\rho_\infty  ),
\end{equation}
for $d=1,\ldots,D$.
Note that
such a solution always exists since the right hand side vector of Eqn. \eqref{eq:Qa} is orthogonal to $\vec{\pi}$. We obtain the following result for the mean workload.

\begin{proposition}\label{prop:EW}
For any service requirement distribution $H_d(\cdot)$ and any service discipline that does not depend on the state of the environment, the mean of the workload satisfies
\begin{equation}\label{eq:Wdweighted}
 \E W = \frac{\sum_d \left[\pi_d \lambda_d h_{d2}/2 + a_d\pi_d (\lambda_d h_{d1} - c_d) + p_{0,d}c_d a_d\right]}{c_\infty(1 - \rho_\infty)},
\end{equation}
where $\vec{a}$ is a solution of Eqn. \eqref{eq:Qa}.
Furthermore, $1 - \rho_\infty = \sum_d p_{0,d}c_d/c_\infty.$
 \end{proposition}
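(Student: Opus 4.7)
The plan is to apply the stationary identity $\E[(\mathcal{L}f)(W,Z)]=0$ for the joint piecewise-deterministic Markov process $(W,Z)$ to two carefully chosen test functions. For $f(w,d)$ smooth in $w$, the generator reads
\begin{equation*}
(\mathcal{L}f)(w,d)=-c_d\,\partial_w f(w,d)\,\mathbf{1}\{w>0\}+\lambda_d\!\int_0^\infty\!\!\bigl[f(w+x,d)-f(w,d)\bigr]\DD H_d(x)+\sum_{\ell}q_{d\ell}f(w,\ell),
\end{equation*}
reflecting that the workload drains at rate $c_d$ while positive, receives i.i.d.\ $H_d$-jumps at Poisson rate $\lambda_d$, and the background chain makes transitions at rates $q_{d\ell}$. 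Finiteness of $\E W^2$ (standard under $\rho_\infty<1$ and $h_{d2}<\infty$) legitimises the identity on the polynomial test functions used below.

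First, I would apply the identity to $f(w,d)=w$. The modulation term vanishes because $\sum_\ell q_{d\ell}=0$, arrivals contribute $\lambda_d h_{d1}$, and the drift contributes $-c_d\mathbf{1}\{w>0\}$. Integrating against the stationary law and using $P(W>0,Z=d)=\pi_d-p_{0,d}$ together with $\sum_d\pi_d\lambda_d h_{d1}=c_\infty\rho_\infty$ immediately yields the second assertion $1-\rho_\infty=\sum_d p_{0,d}c_d/c_\infty$; this is the modulated analogue of the standard M/G/1 observation that the server is busy a capacity-weighted fraction $\rho$ of the time.

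For the mean workload I would take $f(w,d)=w^2/2+w\,a_d$ with $\vec a$ any solution of~\eqref{eq:Qa}. A direct computation gives
\begin{equation*}
(\mathcal{L}f)(w,d)=-c_d(w+a_d)\mathbf{1}\{w>0\}+\lambda_d\bigl[wh_{d1}+h_{d2}/2+a_d h_{d1}\bigr]+w[Q\vec a]_d.
\end{equation*}
The crucial cancellation is in the coefficient of $w$, which equals $-c_d+\lambda_d h_{d1}+[Q\vec a]_d$ and, by the defining equation~\eqref{eq:Qa}, collapses exactly to $-c_\infty(1-\rho_\infty)$. Setting $w_d:=\E[W\mathbf{1}\{Z=d\}]$ so that $\sum_d w_d=\E W$, taking stationary expectations, and collecting the remaining $\pi_d$- and $p_{0,d}$-terms produces \eqref{eq:Wdweighted} after division by $c_\infty(1-\rho_\infty)$.

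The main obstacle is conceptual, namely spotting the right test function: introducing the auxiliary vector $\vec a$ through~\eqref{eq:Qa} is exactly what is needed for the generator applied to $w\,a_d$ to cancel, via the term $w[Q\vec a]_d$, the linear-in-$w$ drift and arrival contributions produced by the $w^2/2$ piece, thereby isolating $\E W$ with coefficient $c_\infty(1-\rho_\infty)$. Once this is seen the arithmetic is mechanical; the remaining technical work is the verification that the stationary generator identity applies to these polynomial test functions, which follows from $\E W^2<\infty$ via a routine Lyapunov/uniform-integrability argument under $\rho_\infty<1$ and $h_{d2}<\infty$.
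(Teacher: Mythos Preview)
Your argument is correct. It is, however, a genuinely different route from the paper's. The paper works analytically: it writes the Kolmogorov forward equation for $F_d(x)=\P(W<x,Z=d)$, passes to the Laplace--Stieltjes transform $\varphi_d(s)$, obtains the basic identity
\[
\sum_\ell q_{\ell d}\varphi_\ell(s)=\bigl[\lambda_d(1-h_d(s))-sc_d\bigr]\varphi_d(s)+s\,p_{0,d}c_d,
\]
and then extracts the two claims by (i) summing over $d$, dividing by $s$ and sending $s\to 0$, and (ii) differentiating at $s=0$ both the plain sum and the $a_d$-weighted sum and adding the resulting equations. Your generator approach packages exactly the same algebra into two test functions: $f=w$ reproduces step~(i), and the single choice $f=w^2/2+w\,a_d$ collapses the paper's two differentiated identities into one, the point being that the moment relation from $w^2/2$ and the $a_d$-weighted relation from $w\,a_d$ are combined so that the linear-in-$w$ terms cancel via~\eqref{eq:Qa}. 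The underlying idea---introducing $\vec a$ through $[Q\vec a]_d=c_d-\lambda_d h_{d1}-c_\infty(1-\rho_\infty)$ to eliminate the unknown $W_d$-terms---is the same in both proofs. Your route is arguably more transparent for the proposition itself; the paper's LST derivation has the advantage that the transform equation it establishes is reused verbatim in the heavy-traffic proof of Theorem~\ref{thm:wht} in the Appendix.
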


 \begin{remark}
Although the solution vector $\vec{a}$ is not unique, the term 
\[
 \sum_{d=1}^D a_d[\pi_d (\lambda_d h_{d1} - c_d) + p_{0,d} c_d],
\]
as appearing in Eqn. \eqref{eq:Wdweighted}, is. This is due to the following argument: Suppose $\vec{a}$ and $\vec{a}^*$ are two solutions to Eqn. \eqref{eq:Qa}.
Then $0 =  Q\vec{a} - Q\vec{a^*} = Q(\vec{a} - \vec{a}^*)$, so $(\vec{a} - \vec{a}^*)$ is in the nullspace of $Q$. But $Q$ is a generator matrix so it can easily be seen that $Q\vec{r} = 0$ for any vector $r\cdot \vec{1}^T = (r,r,\ldots,r)^T$, $r\in\R$. Also, since the environment is an irreducible Markov chain, the nullspace of $Q$ has dimension 1, and therefore, $(\vec{a} - \vec{a}^*) = r_a\cdot \vec{1}^T$, for some $r_a\in\R$. Thus,
\[
 \sum_{d=1}^D (a_d - a^*_d)[\pi_d(\lambda_d h_{d1} - c_d) + p_{0,d}c_d]
 = r_a c_\infty (\rho_\infty - 1) 
 + r_a c_\infty (1 - \rho_\infty) = 0,
\]
where the first term follows by definition of $\rho_\infty$ and $c_\infty$ and the second term comes from Eqn. \eqref{eq:pd0cdlimit} in the proof below.
\end{remark}

 \begin{proof}[Proof of Proposition \ref{prop:EW}]
Define $F_d(x,t) = P(W(t) < x, Z(t) = d)$, for some time $t>0$. In an infinitesimal time ${\rm d}t$, a new arrival requiring service $x$ changes the workload with probability $\lambda_d H_d(x){\rm d}t$. The service capacity is scaled by $c_d$ when the environment is in state $d$, meaning that in ${\rm d}t$ time, the workload is reduced by $c_d{\rm d}t$,
yielding by a classic birth-and-death argument for the M/G/1 queue,
\begin{align*}
F_d(x,t+{\rm d}t) &= (1-\lambda_d {\rm d}t + q_{dd} {\rm d}t)F_d(x + c_d{\rm d}t,t) \\
&+ \sum_{\ell\neq d}q_{\ell d}F_\ell(x + c_\ell{\rm d}t,t){\rm d}t + \lambda_d {\rm d}t\int_0^x F_d(x+c_d{\rm d}t -y,t) {\rm d} H_d(y).
\end{align*}
We let $t\to\infty$ to go to steady-state and since
\[
\frac{F_d(x + c_d{\rm d}t) - F_d(x)}{{\rm d}t} = c_d \frac{F_d(x + c_d{\rm d}t) - F_d(x)}{c_d{\rm d}t} 
\underset{{\rm d}t \downarrow 0}{\longrightarrow}
c_d F_d'(x),
\]
we obtain 
\[
 c_d F_d'(x) = (\lambda_d - q_{dd}) F_d(x) - \sum_{\ell\neq d}q_{\ell d}F_\ell(x) - \lambda_d \int_0^x F_d(x -y) {\rm d} H_d(y).
\]
Denote the LST of $F_d(\cdot)$ by $\varphi_d(s) = \E[\mathrm{e}^{-sW(t)}, Z(t)=d] = p_{0,d} + \int_{0+}^\infty \mathrm{e}^{-sx} {\rm d}F_d(x)$. The corresponding transform equation becomes
\begin{equation}\label{eq:phiLST}
 \sum_{\ell=1}^D q_{\ell d}\varphi_\ell(s) = [\lambda_d(1-h_d(s)) - sc_d]\varphi_d(s) + s p_{0,d}c_d.
\end{equation}
It is now convenient to sum over $d$ and divide through Eqn. \eqref{eq:phiLST} with $s$ to get zero on the left hand side, leading to 
\begin{equation}\label{eq:pd0cd}
 \sum_d p_{0,d} c_d = \sum_d \varphi_d(s)\left[c_d - \lambda_d \frac{(1-h_d(s))}{s}\right].
\end{equation}
Using that $\varphi_d(0) = \pi_d$ and by l'H\^opital
\[
 \lim_{s\downarrow 0} \frac{(1-h_d(s))}{s} = -\lim_{s\downarrow 0} h'_d(s) = h_{d1},
\]
we get by taking the limit $s\to 0$ of Eqn. \eqref{eq:pd0cd} that 
\begin{equation}\label{eq:pd0cdlimit}
 \sum_d \frac{p_{0,d}c_d}{c_\infty} = 1 - \rho_\infty.
\end{equation}

We differentiate Eqn. \eqref{eq:pd0cd} w.r.t. $s$:\\
\[
 \sum_d\varphi_d(s)\lambda_d\left[\frac{h'_d(s)}{s} + \frac{1-h_d(s)}{s^2}\right] 
 = \sum_d\varphi'_d(s)\left[c_d - \lambda_d\frac{1-h_d(s)}{s}\right],
\]

which in the limit of $s\to 0$ results in
\begin{equation}\label{eq:Wd2nd}
\sum_d \frac{\pi_d \lambda_dh_{d2}}{2} = \sum_d W_d \left[c_d - \lambda_d h_{d1}\right],
\end{equation}
with the first moment of the workload while in state $d$ being
\[
-\lim_{s\downarrow 0} \varphi'_d(s) = \E[W, Z=d] =: W_d.
\]
 
Now multiply Eqn. \eqref{eq:phiLST} with $a_d$, sum over $d$, take the derivative w.r.t. $s$ and let $s\to 0$ to obtain
\[
 -\sum_d W_d [ c_\infty(\rho_\infty - 1) + c_d - \lambda_d h_{d1}] = \sum_d \left[a_d\pi_d (\lambda_d h_{d1} - c_d) + p_{0,d}c_d a_d\right],
\]
by using Eqn. \eqref{eq:Qa}. Adding this equation to Eqn.\eqref{eq:Wd2nd} yields
\begin{equation}
 c_\infty(1 - \rho_\infty)\sum_d W_d = \sum_d \left[\frac{\pi_d \lambda_d h_{d2}}{2} + a_d\pi_d (\lambda_d h_{d1} - c_d) + p_{0,d}c_d a_d\right],
\end{equation}
which gives the desired expression for the mean of the workload, $\E W = \sum_d W_d$. 
\end{proof}

Eqn.~\eqref{eq:pd0cdlimit} makes it clear that in heavy traffic, that is when $\rho^{(N)}_\infty \to 1$,
all the probabilities $p^{(N)}_{0,d}$ go to zero. 
Also, in heavy traffic, the right hand side of Eqn. \eqref{eq:Qa} reduces to $c_d - \hat \lambda_d h_{d1}$.
Recalling the parametrization $1-\rho^{(N)}_\infty = 1/N$ in Section \ref{sec:model}, we obtain the following result.

\begin{proposition}\label{prop:EWinHT}
For any service distribution $H_d(\cdot)$ and any service discipline that does not depend on the state of the environment, the mean of the workload in heavy traffic satisfies
\begin{equation}\label{eq:EWinHT}
 \lim_{N\to\infty}\frac{1}{N} \E W^{(N)}
 = \frac{1}{c_\infty}\sum_d \pi_d\left[\hat{\lambda}_d h_{d2}/2 + a_d (\hat{\lambda}_d h_{d1} - c_d)\right],
\end{equation} 
where $\vec{a}$ is a solution of $[Q\cdot \vec{a}]_d 
 =   c_d - \hat \lambda_d h_{d1}$.
\end{proposition}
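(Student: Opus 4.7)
The plan is to start from the exact formula for $\E W$ in Proposition~\ref{prop:EW}, rewrite it with all prelimit quantities (superscript $(N)$), divide by $N$, and then pass to the limit term by term. Concretely, using the parametrization $1 - \rho_\infty^{(N)} = 1/N$ in the denominator yields $N\cdot c_\infty(1-\rho_\infty^{(N)}) = c_\infty$, so
\[
 \frac{1}{N}\E W^{(N)} = \frac{1}{c_\infty}\sum_d\left[\pi_d \lambda_d^{(N)} h_{d2}/2 + a_d^{(N)}\pi_d(\lambda_d^{(N)} h_{d1} - c_d) + p_{0,d}^{(N)} c_d a_d^{(N)}\right],
\]
and it remains to justify that each term converges to its natural candidate.

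For the first two terms inside the sum, the definition in Eqn.~\eqref{eq:HTlambda} gives $\lambda_d^{(N)}\to\hat\lambda_d$, and the right-hand side of Eqn.~\eqref{eq:Qa} for the prelimit system equals $c_d - \lambda_d^{(N)} h_{d1} - c_\infty/N$, which converges to $c_d - \hat\lambda_d h_{d1}$. Since the null-space of $Q$ is one-dimensional and the right-hand sides are orthogonal to $\vec\pi$ for every $N$, one may choose representatives $\vec a^{(N)}$ converging to a solution $\vec a$ of the limiting equation $[Q\vec a]_d = c_d - \hat\lambda_d h_{d1}$ (for instance by normalizing so that $\pi\cdot\vec a^{(N)} = 0$). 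Moreover, by the remark following Proposition~\ref{prop:EW} the relevant combination is invariant under the addition of a constant vector to $\vec a$, so the particular choice is immaterial.

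For the boundary term $p_{0,d}^{(N)} c_d a_d^{(N)}$, the key observation is Eqn.~\eqref{eq:pd0cdlimit}, which in the prelimit reads $\sum_d p_{0,d}^{(N)} c_d/c_\infty = 1 - \rho_\infty^{(N)} = 1/N \to 0$. Since $p_{0,d}^{(N)}\ge 0$ and $c_d>0$, this forces $p_{0,d}^{(N)} \to 0$ for each $d$; combined with the boundedness of $a_d^{(N)}$ established in the previous step, the whole boundary contribution vanishes in the limit. Collecting everything gives exactly the right-hand side of Eqn.~\eqref{eq:EWinHT}.

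The only slightly delicate point is the bookkeeping around $\vec a^{(N)}$: one has to argue that a bounded, convergent sequence of solutions can be selected, and that the boundary term $p_{0,d}^{(N)} a_d^{(N)}$ is genuinely $o(1)$ (rather than, say, $0/0$). Both are handled by the explicit invariance under constant shifts noted in the remark together with the one-dimensional null-space argument, so no further analytic machinery is required; everything else is a substitution followed by a termwise limit.
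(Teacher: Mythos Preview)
Your proposal is correct and follows essentially the same approach as the paper's proof, which simply invokes Eqn.~\eqref{eq:Wdweighted}, the identity $1-\rho_\infty^{(N)}=1/N$, and the fact that $p_{0,d}^{(N)}\to 0$. If anything, you are more careful than the paper in tracking the $N$-dependence of $\vec a^{(N)}$ and justifying that a convergent choice exists via the one-dimensional null-space; the paper leaves this implicit.
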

\begin{proof}
Under the heavy traffic scaling, the empty probabilities $p^{(N)}_{0,d}$ go to zero for $d=1,\ldots,D$. The result then follows immediately from Eqn. \eqref{eq:Wdweighted} and $1-\rho^{(N)}_\infty = 1/N$.
\end{proof}
This leads to the main result of this section.\\
\begin{theorem}\label{thm:wht}
In heavy traffic, the scaled workload $\frac{1}{N}W^{(N)}$, converges in distribution to  $\hat{W}$,
 where $\hat{W}$ is exponentially distributed with mean given in Eqn. \eqref{eq:EWinHT}.
 \end{theorem}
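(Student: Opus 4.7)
The plan is to show pointwise convergence of the Laplace--Stieltjes transform of $W^{(N)}/N$ to $1/(1+s\hat m)$, where $\hat m$ is the mean given in Eqn.~\eqref{eq:EWinHT}, and invoke the continuity theorem. Since Proposition~\ref{prop:EWinHT} already identifies $\hat m$, the real work is to show the limiting LST has exponential form. I would start from Eqn.~\eqref{eq:phiLST} at the scaled argument $s/N$, writing $\hat\varphi^{(N)}_d(s) := \varphi^{(N)}_d(s/N) = \E[\mathrm{e}^{-sW^{(N)}/N}, Z^{(N)}=d]$. The right-hand side is manifestly $O(1/N)$, so $\sum_\ell q_{\ell d}\hat\varphi^{(N)}_\ell(s)\to 0$; because the left null space of $Q$ is spanned by $\vec\pi$, this forces the state-space collapse $\hat\varphi^{(N)}_d(s)\to\pi_d\hat\varphi(s)$ for some scalar $\hat\varphi$. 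Probabilistically, $W^{(N)}/N$ decouples asymptotically from the environment, reflecting that $Z$ equilibrates on a time scale much shorter than the one on which $W^{(N)}/N$ varies.

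To identify $\hat\varphi(s)$ I would use the expansion $\hat\varphi^{(N)}_d(s)=\pi_d\hat\varphi(s)+\psi_d(s)/N+o(1/N)$ and extract $O(1/N)$ coefficients of Eqn.~\eqref{eq:phiLST}. Taylor-expanding $1-h_d(s/N)=(s/N)h_{d1}-(s/N)^2 h_{d2}/2+O(1/N^3)$, using $\lambda^{(N)}_d=\hat\lambda_d(1-1/N)$, and exploiting $\sum_d p^{(N)}_{0,d}c_d=c_\infty/N$ from Eqn.~\eqref{eq:pd0cdlimit} (so in particular $p^{(N)}_{0,d}=O(1/N)$), matching the $O(1/N)$ coefficient of Eqn.~\eqref{eq:phiLST} gives
\begin{equation*}
\sum_\ell q_{\ell d}\psi_\ell(s)=s\,\pi_d\hat\varphi(s)\bigl[\hat\lambda_d h_{d1}-c_d\bigr].
\end{equation*}
Independently, inserting the expansion into the summed identity Eqn.~\eqref{eq:pd0cd} at $s/N$ --- with the $O(1)$ piece vanishing because $\hat\rho_\infty=1$ --- produces
\begin{equation*}
\sum_d\psi_d(s)\bigl[c_d-\hat\lambda_d h_{d1}\bigr]+\hat\varphi(s)\Bigl[c_\infty+\tfrac{s}{2}\sum_d\pi_d\hat\lambda_d h_{d2}\Bigr]=c_\infty.
\end{equation*}

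To eliminate the unknown correction $\psi_d$ I would multiply the first displayed relation by $a_d$ and sum: because $\sum_d a_d\sum_\ell q_{\ell d}\psi_\ell = \sum_\ell\psi_\ell[Q\vec a]_\ell$ and $[Q\vec a]_d=c_d-\hat\lambda_d h_{d1}$ by Eqn.~\eqref{eq:Qa} in heavy traffic, this yields $\sum_d\psi_d[c_d-\hat\lambda_d h_{d1}]=-s\hat\varphi(s)\sum_d a_d\pi_d[c_d-\hat\lambda_d h_{d1}]$. Substituting into the second displayed equation produces
\begin{equation*}
\hat\varphi(s)\Bigl\{1+\tfrac{s}{c_\infty}\sum_d\pi_d\bigl[\hat\lambda_d h_{d2}/2+a_d(\hat\lambda_d h_{d1}-c_d)\bigr]\Bigr\}=1,
\end{equation*}
so $\hat\varphi(s)=1/(1+s\hat m)$, which is the LST of an exponential with mean $\hat m$ from Eqn.~\eqref{eq:EWinHT}. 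The main technical obstacle is the rigorous justification of the $1/N$-expansion of $\hat\varphi^{(N)}_d(s)$; this requires enough uniformity in $s$ to match coefficients. A cleaner alternative I would use in the final write-up avoids the expansion entirely: apply $\sum_d$ and $\sum_d a_d\cdot$ directly to the rescaled Eqn.~\eqref{eq:phiLST}, multiply by $N$, and pass to the limit using only the zeroth-order decoupling together with $p^{(N)}_{0,d}=O(1/N)$, which yields the same two scalar identities and so the same conclusion.
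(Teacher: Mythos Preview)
Your approach is correct and genuinely different from the paper's. The paper (in its Appendix) rewrites Eqn.~\eqref{eq:phiLST} in matrix--vector form, constructs a partial right inverse $R$ of $Q$ from the $(D-1)\times(D-1)$ lower block $Q_1$, iterates the resulting identity once to express $\vec\varphi(s)$ in terms of $\varphi_1(s)$, and obtains an \emph{exact} closed formula $\varphi_1(s/N)=\pi_1 B_1(s/N)/[-(B_2(s/N)+B_3(s/N))]$ whose limit it then reads off term by term. Your route bypasses the matrix $R$ entirely: you reuse the same ``multiply by $a_d$ and sum'' device that already proved Proposition~\ref{prop:EW}, so the whole argument stays at the level of two scalar identities and never leaves the tools of Section~\ref{sec:workload}. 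This is more elementary and conceptually cleaner. What the paper's approach buys is that existence of the limit is automatic, since $\varphi_1(s/N)$ is given explicitly; in your version you should make the standard subsequence step explicit (each $\hat\varphi^{(N)}_d(s)\in[0,1]$, any subsequential limit satisfies the decoupling and the combined scalar identity, hence is uniquely determined, so the full sequence converges). Your ``cleaner alternative'' is exactly the right way to make the argument rigorous: substituting the $a$-weighted identity into the summed identity \emph{before} multiplying by $N$ leaves only zeroth-order occurrences of $\hat\varphi^{(N)}_d$, so no $1/N$-expansion of $\hat\varphi^{(N)}_d$ is needed.
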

\begin{proof}
 This follows from combining Proposition \ref{prop:EWinHT} with Theorem 4 in \cite{Dimi11}. The full proof is in the Appendix.
\end{proof}

The above results yields that $\hat{W}$ is relatively compact, 
which together with the metric space being separable and complete implies that the scaled workload $\frac{1}{N}W^{(N)}$ is tight, by Prohorov's theorem \cite{Billingsley}.

\section{Queue length vector under DPS}\label{sec:queuelength}

In the remainder of the paper we focus on the multi-class model under DPS, where we assume that the service requirements of class-$k$ customers are exponentially distributed with rate $\mu_k$. In this section we establish some properties of the joint queue length distribution.
We start with the flow equations, followed by a rate conservation law and 
an equation for the moments of the queue lengths conditioned on the environment.

Equating the flow in and out of state $(\vec{M},Z)=(\vec{m},d)$ yields (noting that $-q_{dd}=\sum_{\ell\neq d}q_{d \ell}$)
\begin{align}\nonumber
 & \left(\sum_{k=1}^K (\lambda_{k,d} + \frac{g_k m_k}{\sum_i g_i m_i}\mu_{k,d}
 \cdot\vec{1}_{\{m_k>0\}}) - q_{dd}\right)p_{\vec{m},d} \\ \label{eq:floweqn}
 & =\sum_{k=1}^K(\lambda_{k,d}p_{\vec{m-e}_k,d}
 \cdot\vec{1}_{\{m_k>0\}} + \frac{g_k (m_k+1)}{\sum_i g_i m_i + g_k}\mu_{k,d}p_{\vec{m+e}_k,d}) + \sum_{\ell\neq d}q_{d\ell}p_{\vec{m},\ell},
\end{align}
where $p_{\vec{m},d}:=\P((\vec{M},Z)=(\vec{m},d))$ and $\vec{e}_k$ is the vector with 1 in the $k$-th place and zeros elsewhere.
We now define the partial probability generating function (PGF)
for when the background process is in state $d$: 
\begin{align*}
P_d(\vec{z}) &:= \E[z_1^{M_1}\cdots z_K^{M_K} \cdot \vec{1}_{\{Z=d\}}] \\
&:= \sum_{m_1=0}^\infty\cdots\sum_{m_K=0}^\infty \P(M_1=m_1,\ldots,M_K=m_K,Z=d) \cdot z_1 ^{m_1}\cdots z_K^{m_K} \\
&=  \sum_{\vec{m} \geq \vec{0}}  p_{\vec{m},d} {\vec{z^m}},
\end{align*}
where $z_1^{m_1}\cdots z_K^{m_K} =: \vec{z^m}$ and $(m_1,\ldots,m_K) \geq (0,\ldots,0)$, i.e. $\vec{m}\geq \vec{0}$.
Then the overall generating function for the queue length is $P(\vec{z}) := \E[z_1^{M_1}\cdots z_K^{M_K}] = \sum_{d=1}^D P_d(\vec{z})$.
We also define 
\begin{align*}
 R_d(\vec{z}) &:=  \sum_{\vec{m} \geq \vec{0}}  \frac{p_{\vec{m},d} {\vec{z^m}}}{\sum_j g_j m_j}\cdot \vec{1}_{\{\sum_{j=1}^K m_j > 0\}},\quad \text{hence}\\
 \frac{\p R_d(\vec{z})}{\p z_k} &= z_k^{-1} \sum_{\vec{m} \geq \vec{e}_k}\frac{ m_k}{\sum_j g_j m_j}p_{\vec{m},d}{\vec{z^m}}\cdot \vec{1}_{\{\sum_{j=1}^K m_j > 0\}}.
\end{align*}
By multiplying the flow equation \eqref{eq:floweqn} with $\vec{z}^{\vec{m}}$, summing over all vectors $\vec{m}\geq\vec{0}$ and rearranging terms, we can eventually write it in terms of the PGF $P_d(\vec{z})$ and the partial derivative $\p R_d(\vec{z})/\p z_k$, that is,
\begin{equation}\label{eq:pgfpdeRP}
\sum_{k=1}^K \left[ \lambda_{k,d} (1-z_k)P_d(\vec{z}) + \mu_{k,d} g_k (z_k-1)\frac{\p R_d(\vec{z})}{\p z_k}\right] = \sum_{\ell=1}^D P_\ell(\vec{z})q_{\ell d}.
\end{equation}

It will be convenient to write the equation fully in terms of $\p R_d / \p z_k$, so we note the relation
\begin{equation}\label{eq:PpartialR}
P_d(\vec{z}) = \sum_{k=1}^K g_k z_k \frac{\p R_d(\vec{z})}{\p z_k} + p_{\vec{0},d} 
\end{equation}
where $p_{\vec{0},d} = P((\vec{M},Z)=(\vec{0},d))$ is the probability of an empty queue in state $d$, which is equivalent to the probability of no workload defined in Section \ref{sec:workload}.
We incorporate this into Eqn. \eqref{eq:pgfpdeRP} to obtain
\begin{align} \label{eq:multimodPDE}
\sum_{k=1}^K &\lambda_{k,d} (1-z_k)\left[\sum_{j=1}^K g_j z_j \frac{\p R_d(\vec{z})}{\p z_j} + p_{\vec{0},d}\right] \nonumber \\
&+ \sum_{k=1}^K\mu_{k,d} g_k (z_k-1)\frac{\p R_d(\vec{z})}{\p z_k} = \sum_{\ell=1}^D\left[\sum_{k=1}^K g_k z_k \frac{\p R_\ell(\vec{z})}{\p z_k} + p_{\vec{0},\ell}\right]q_{\ell d}.
 \end{align}
 This equation we will use later when deriving the heavy-traffic limit.
 
 For the M/M/1 queue with modulated arrivals and service times, moments of the queue length and the sojourn times can be found for the FCFS service discipline, in \cite{YechNaor} and \cite{Mahab05}, respectively. In \cite{Rege96} the authors establish a recursive formula to calculate moments of the queue length in a non-modulated DPS system. In a similar fashion, we obtain an expression for the sum of the state-dependent moments weighted with the capacity of the server. We also derive a {\it rate conservation law}, which shows how the average arrival rates per class are proportional to the resources allocated to that same class, and the service they receive. Both results can be found in the following proposition.
 
 \begin{proposition}
  When the queue is stable, the average number of class-$k$ arrivals is proportional to the service resources allocated to class-$k$ customers, i.e.
\[
 \lambda_{k,\infty} = \sum_d \mu_{k,d} \E\left[\frac{g_k M_k}{\sum_j g_j M_j}\cdot \vec{1}_{\{\sum_j M_j>0 \}} \cdot \vec{1}_{\{Z=d\}} \right],
\]
for $k=1,\ldots,K$. Furthermore, the state-dependent expectations of $M_k$ satisfy
\[
 \sum_d c_d \E[M_k \cdot \vec{1}_{\{Z=d\}} ]=
\frac{
\lambda_{k,\infty}}{\mu_k} + \sum_{d,j} g_j\frac{\lambda_{k,d}
\E[M_j \cdot \vec{1}_{\{Z=d\}} ]
+ \lambda_{j,d}
\E[M_k\cdot \vec{1}_{\{Z=d\}}]
}
{\mu_kg_k + \mu_jg_j}.
\]
 \end{proposition}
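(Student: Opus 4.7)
The natural starting point is Eqn.~\eqref{eq:pgfpdeRP}, which already encodes the steady-state balance in terms of $P_d$ and the partial derivatives of $R_d$. Both assertions will follow by differentiating \eqref{eq:pgfpdeRP} at $\vec{z}=\vec{1}$ and summing over $d$ so that the right-hand side collapses via $\sum_{d}q_{\ell d}=0$; the rate conservation law needs one derivative, the moment identity two.

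For the first claim, I differentiate \eqref{eq:pgfpdeRP} with respect to $z_k$ and evaluate at $\vec{z}=\vec{1}$. Every term containing a factor $(1-z_k)$ or $(z_k-1)$ vanishes except through the derivative of that factor, so the left-hand side reduces to $-\lambda_{k,d}\pi_d+\mu_{k,d}g_k\,\frac{\p R_d}{\p z_k}(\vec{1})$, while the right-hand side becomes $\sum_{\ell}\E[M_k\vec{1}_{\{Z=\ell\}}]\,q_{\ell d}$. Summing over $d$ and invoking $\sum_d q_{\ell d}=0$ kills the right-hand side and leaves $\lambda_{k,\infty}=g_k\sum_{d}\mu_{k,d}\frac{\p R_d}{\p z_k}(\vec{1})$. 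Since $\frac{\p R_d}{\p z_k}(\vec{1})=\E\!\left[\tfrac{M_k}{\sum_j g_j M_j}\vec{1}_{\{\sum_j M_j>0\}}\vec{1}_{\{Z=d\}}\right]$ by the very definition of $R_d$, this is exactly the stated rate conservation law.

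For the second claim, I differentiate \eqref{eq:pgfpdeRP} with respect to $z_k$ and then with respect to $z_j$, and evaluate at $\vec{z}=\vec{1}$. A careful product-rule expansion shows that only the contributions $-\lambda_{k,d}\E[M_j\vec{1}_{\{Z=d\}}]-\lambda_{j,d}\E[M_k\vec{1}_{\{Z=d\}}]+(\mu_{k,d}g_k+\mu_{j,d}g_j)\frac{\p^2 R_d}{\p z_k \p z_j}(\vec{1})$ survive on the left, while the right-hand side is $\sum_\ell\frac{\p^2 P_\ell}{\p z_k \p z_j}(\vec{1})\,q_{\ell d}$. Summing over $d$ again removes the right-hand side, and using the product form $\mu_{k,d}=\mu_k c_d$ yields
\[
(\mu_k g_k+\mu_j g_j)\sum_d c_d\,\frac{\p^2 R_d}{\p z_k\p z_j}(\vec{1})=\sum_d\bigl(\lambda_{k,d}\E[M_j\vec{1}_{\{Z=d\}}]+\lambda_{j,d}\E[M_k\vec{1}_{\{Z=d\}}]\bigr).
\]
To convert second derivatives of $R_d$ back into information about $M_k$, I differentiate Eqn.~\eqref{eq:PpartialR} once in $z_k$, evaluate at $\vec{z}=\vec{1}$, multiply by $c_d$ and sum over $d$:
\[
\sum_d c_d\,\E[M_k\vec{1}_{\{Z=d\}}]=g_k\sum_d c_d\,\frac{\p R_d}{\p z_k}(\vec{1})+\sum_j g_j\sum_d c_d\,\frac{\p^2 R_d}{\p z_k\p z_j}(\vec{1}).
\]
The first right-hand-side term equals $\lambda_{k,\infty}/\mu_k$ by the rate conservation law just proven (which reads $\lambda_{k,\infty}=\mu_k g_k\sum_d c_d\,\frac{\p R_d}{\p z_k}(\vec{1})$ after factoring $\mu_{k,d}=\mu_k c_d$), and inserting the preceding display into the second term, dividing each $j$-contribution by $\mu_k g_k+\mu_j g_j$, produces exactly the claimed identity. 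The main obstacle is the bookkeeping for the second derivative: one must separate which terms vanish at $\vec{z}=\vec{1}$ because of the factors $(1-z_k)$, which ``boundary'' terms survive when such a factor is differentiated exactly once, and then exploit $\mu_{k,d}=\mu_k c_d$ so that $c_d$ factors cleanly out of the coefficient of $\frac{\p^2 R_d}{\p z_k \p z_j}(\vec{1})$ and pairs with $\E[M_k\vec{1}_{\{Z=d\}}]$ on the target side.
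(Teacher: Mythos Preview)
Your proposal is correct and follows essentially the same approach as the paper: both differentiate Eqn.~\eqref{eq:pgfpdeRP} once (for the rate conservation law) and twice (for the moment identity), combine with the derivative of Eqn.~\eqref{eq:PpartialR}, and exploit $\sum_d q_{\ell d}=0$ to eliminate the generator terms. The only cosmetic difference is ordering: the paper first solves for $\frac{\p^2 R_d}{\p z_k\p z_j}(\vec{1})$ at fixed $d$ (so a cross-moment term $\sum_\ell \E[M_k M_j\vec{1}_{\{Z=\ell\}}]q_{\ell d}$ appears and is then killed upon summing over $d$), whereas you sum over $d$ immediately after the second differentiation, so that term never enters.
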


 \begin{proof}

We sum Eqn. \eqref{eq:pgfpdeRP} over $d$ and take the derivative w.r.t. $z_i$ to obtain
\begin{align*}
 \sum_d\left[-\lambda_{i,d}P_d(\vec{z}) + \sum_j \lambda_{j,d}(1-z_j)\frac{\p P_d(\vec{z})}{\p z_j} + \mu_{i,d}g_i \frac{\p R_d(\vec{z})}{\p z_i} + \sum_j \mu_{j,d} g_j (z_j-1)\frac{\p^2 R_d(\vec{z})}{\p z_i\p z_j} \right] = 0.
  \end{align*}
Letting $\vec{z}\to 1$ yields
  \begin{equation}\label{eq:lambdamusto0}
 \sum_d\left[\mu_{i,d}g_i \frac{\p R_d(\vec{z})}{\p z_i}\bigg|_{\vec{z}\to 1} - \lambda_{i,d}P_d(\vec{z})\bigg|_{\vec{z}\to 1}\right] = 0.
  \end{equation}
Since
\[
  \lim_{\vec{z}\to 1} P_d(\vec{z}) = \pi_d,
\]
the following conservation law results from Eqn. \eqref{eq:lambdamusto0}, for $k=1,\ldots,K$:
\begin{align}\label{eq:ratecons}
 \lambda_{k,\infty} &= \sum_d \mu_{k,d}g_k \frac{\p R_d(\vec{z})}{\p z_k}\bigg|_{\vec{z}\to 1} \\
 &= \sum_d \mu_{k,d} \E\left[\frac{g_k M_k}{\sum_j g_j M_j}\cdot \vec{1}_{\{\sum_j M_j>0 \}} \cdot \vec{1}_{\{Z=d\}} \right]. \nonumber
\end{align}

By taking partial derivatives of Eqn. \eqref{eq:PpartialR}, we obtain after standard calculations the recursive relation
\begin{equation}\label{eq:momentrecursion}
 \frac{\p^j P_d(\vec{z})}{\p z_{i_1}\cdots \p z_{i_j}}\bigg|_{\vec{z}\to 1} = \sum_{k=1}^K g_k \frac{\p^{j+1} R_d(\vec{z})}{\p z_{i_1}\cdots \p z_{i_j}\p z_k} 
 + \sum_{\ell=1}^j g_{i_\ell} \frac{\p^j R_d(\vec{z})}{\p z_{i_1}\cdots \p z_{i_j}}.
\end{equation}
In particular, this yields the explicit form
\[
 \E[M_k \cdot \vec{1}_{\{Z=d\}}] = \frac{\p P_d}{\p z_k}\bigg|_{\vec{z}\to 1} = \sum_j g_j\frac{\p^2 R_d}{\p z_k\p z_j}\bigg|_{\vec{z}\to 1} + g_k \frac{\p R_d}{\p z_k}\bigg|_{\vec{z}\to 1}.
\]
Proceeding from the rate conservation law, Eqn. \eqref{eq:ratecons}, and by using $\mu_{k,d} = \mu_k c_d$, we have
\[
 \sum_d c_d g_k \frac{\p R_d}{\p z_k}\bigg|_{\vec{z}\to 1} = \frac{\lambda_{k,\infty}}{\mu_k}.
\]
By taking two partial derivatives of the balance equation Eqn. \eqref{eq:pgfpdeRP} we can solve for a second mixed derivative of $R_d$, namely
\[
 \frac{\p^2 R_d}{\p z_k\p z_j}\bigg|_{\vec{z}\to 1} 
 = \frac{
 \sum_\ell 
 \E[M_k M_j\cdot \vec{1}_{\{Z=\ell\}}]
 q_{\ell d} + \lambda_{k,d}
 \E[M_j \cdot \vec{1}_{\{Z=d\}}]
 + \lambda_{j,d}
 \E[M_k \cdot \vec{1}_{\{Z=d\}}]
 }
 {\mu_{k,d}g_k + \mu_{j,d}g_j},
\]
thus also yielding a mixed moment.
Summing over the weighted moments of the number of class-$k$ customers while in state $d$, we obtain a linear equation resembling Eqn. (16) in \cite{Rege96} for the non-modulated DPS queue:
\begin{align*}
 \sum_d &c_d 
 \E[M_k \cdot \vec{1}_{\{Z=d\}}]
 = \frac{\lambda_{k,\infty}}{\mu_k} + \sum_{d,j} c_d g_j\frac{\p^2 R_d}{\p z_k\p z_j}\bigg|_{\vec{z}\to 1} \quad \text{(by Eqn. \eqref{eq:momentrecursion})}\\
 &= \frac{\lambda_{k,\infty}}{\mu_k} + \sum_{d,j} g_j\frac{\sum_\ell 
 \E[M_k M_j \cdot \vec{1}_{\{Z=\ell\}}]
 q_{\ell d} + \lambda_{k,d}
 \E[M_j      \cdot \vec{1}_{\{Z=d\}} ]
 + \lambda_{j,d}
 \E[M_k\cdot \vec{1}_{\{Z=d\}}]
 }{\mu_kg_k + \mu_jg_j} \\
 &= \frac{\lambda_{k,\infty}}{\mu_k} + \sum_{d,j} g_j\frac{\lambda_{k,d}
 \E[M_j\cdot \vec{1}_{\{Z=d\}}]
 + \lambda_{j,d}
 \E[M_k \cdot \vec{1}_{\{Z=d\}}]
 }{\mu_kg_k + \mu_jg_j},
\end{align*}
where the last equality comes from $\sum_d q_{\ell d} = 0$.
 \end{proof}

 \section{Preliminary results for the queue length in heavy traffic}\label{sec:HT}
We proceed to show that in heavy traffic, the distribution of the environment and the joint queue length become independent. This result, along with two technical lemmas that we derive in this section, will later help to establish the main result about the limiting queue length under DPS, presented in Section \ref{sec:DPSproof}.
Here we consider the queue length vector $(M_1,\ldots,M_K)$ scaled with 
$1/N$ and evaluate the PGF in $z^{1/N}$. 
The objective is to 
determine the distribution of $\frac{1}{N}(M_1^{(N)},\ldots,M_K^{(N)})\cdot\vec{1}_{\{Z=d\}}$ 
as $N$ goes to infinity.
We will state the existence of the limiting vector, and thus also the limit of the generating function $P^{(N)}_d(\vec{z}^{1/N})$, as an assumption. This assumption will be proven in Section~\ref{sec:commonfactor}. The superscript $N$ denotes dependency on the prelimit parameter $\lambda^{(N)}_d$.
  
We make use of the change of variables $\mathrm{e}^{-s_k}=z_k$, for $s_k > 0$, and denote $\mathrm{e}^{-\vec{s}/N} := (\mathrm{e}^{-s_1/N},\ldots,\mathrm{e}^{-s_K/N})$. 
Assuming that the limit exists, we use the new variables to define the heavy-traffic quantities: 
We let $\hat{p}_{\vec{0},d}:= \lim_{N\to\infty} p_{\vec{0},d}^{(N)}$,
 $\hat{P}_d(\vec{s}) := \lim_{N\to\infty} P^{(N)}_d(\mathrm{e}^{-\vec{s}/N}) = \lim_{N\to\infty} \E[\mathrm{e}^{-\sum_j s_j {M}_j/N}\cdot  \vec{1}_{\{Z=d\}}]$ and $\hat{P}(\vec{s}) := \sum_d \hat{P}_d(\vec{s})=  \lim_{N\to\infty} \E[\mathrm{e}^{-\sum_j s_j {M}_j/N} ]$.
We denote by $(\hat M_1,\ldots, \hat M_K)$ the random vector corresponding to the LST $\hat{P}(\vec{s})$.
Finally, let 
 \[
 \hat{R}_d(\vec{s})
 := \E\left[\frac{1 - \mathrm{e}^{-\sum_j s_j \hat{M}_j}}{\sum_j g_j \hat{M}_j}\cdot \vec{1}_{\{\sum_j \hat{M}_j>0\}}\cdot \vec{1}_{\{Z=d\}}\right],
 \]
where the 1 in the numerator is to ensure that the bracketed expression remains bounded when the queue length quantities $\hat{M}_k$ are all near zero. We can now proceed to the following lemma.

\begin{lemma}\label{lem:limPd}
If $\lim_{N\to\infty} P^{(N)}_d(\mathrm{e}^{-\vec{s}/N})$ exists, then it satisfies
\begin{equation}\label{eq:lemma1}
\hat{P}_d(\vec{s}) = \sum_{k=1}^K g_k \frac{\p \hat{R}_d(\vec{s})}{\p s_k}  +  \hat{p}_{\vec{0},d}.
\end{equation}
\end{lemma}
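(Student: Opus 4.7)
The plan is to pass to the limit $N\to\infty$ in the prelimit identity \eqref{eq:PpartialR} after the substitution $z_k = \mathrm{e}^{-s_k/N}$. By hypothesis, $P_d^{(N)}(\mathrm{e}^{-\vec{s}/N}) \to \hat{P}_d(\vec{s})$, and by definition $p_{\vec{0},d}^{(N)} \to \hat{p}_{\vec{0},d}$. What remains is to show
\[
\lim_{N\to\infty}\sum_{k=1}^{K} g_k \mathrm{e}^{-s_k/N} \frac{\p R_d^{(N)}(\vec z)}{\p z_k}\bigg|_{\vec z=\mathrm{e}^{-\vec{s}/N}}
= \sum_{k=1}^{K} g_k \frac{\p \hat{R}_d(\vec{s})}{\p s_k}.
\]

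To analyse the prelimit side, I would use the probabilistic expression for $\p R_d/\p z_k$ displayed just before \eqref{eq:pgfpdeRP}, together with the rewriting $M_k/(\sum_j g_j M_j) = (M_k/N)/(\sum_j g_j M_j/N)$, which exposes the correct scaling. This yields
\[
\sum_k g_k \mathrm{e}^{-s_k/N} \frac{\p R_d^{(N)}}{\p z_k}\bigg|_{\mathrm{e}^{-\vec{s}/N}}
= \sum_k g_k\, \E\!\left[\frac{M_k^{(N)}/N}{\sum_j g_j M_j^{(N)}/N}\, \mathrm{e}^{-\sum_j s_j M_j^{(N)}/N}\, \vec{1}_{\{M_k^{(N)}>0\}}\,\vec{1}_{\{Z=d\}}\right].
\]
For each $k$, the integrand is bounded by $1/g_k$, and on the event $\{\sum_j \hat{M}_j > 0\}$ the continuous mapping theorem delivers pointwise convergence to $\hat{M}_k/(\sum_j g_j \hat{M}_j)\cdot\mathrm{e}^{-\sum_j s_j \hat{M}_j}$. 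Bounded convergence, using the joint weak convergence $(\vec{M}^{(N)}/N, Z)\Rightarrow(\hat{\vec M}, Z)$ implied by the assumption, then yields the limit
\[
\sum_k g_k\, \E\!\left[\frac{\hat{M}_k}{\sum_j g_j \hat{M}_j}\, \mathrm{e}^{-\sum_j s_j \hat{M}_j}\, \vec{1}_{\{\sum_j \hat{M}_j>0\}}\,\vec{1}_{\{Z=d\}}\right].
\]

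In parallel, differentiating the definition of $\hat{R}_d(\vec{s})$ directly under the expectation produces the same expression: the $s_k$-derivative of the integrand is $\hat{M}_k\mathrm{e}^{-\sum_j s_j \hat{M}_j}/\sum_j g_j\hat{M}_j$ on $\{\sum_j\hat M_j >0\}$ and is bounded by $1/g_k$ there, which justifies exchanging $\p/\p s_k$ with the expectation by dominated convergence. Matching the two expressions and substituting the convergence of the remaining two terms in \eqref{eq:PpartialR} delivers the claimed identity.

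The main technical obstacle is the boundary event $\{\sum_j \hat{M}_j = 0\}$: there the prelimit ratio $(M_k^{(N)}/N)/(\sum_j g_j M_j^{(N)}/N)$ is an indeterminate form and pointwise convergence is not automatic. This must be handled via the uniform bound $1/g_k$ on the integrand and the observation that the limiting integrand vanishes on this set because of the indicator $\vec{1}_{\{\sum_j\hat{M}_j>0\}}$, so any residual contribution from this boundary is consistently absorbed into the $\hat{p}_{\vec{0},d}$ term on the right-hand side of the identity.
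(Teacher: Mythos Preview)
Your proposal is correct and follows essentially the same route as the paper: both start from the prelimit identity \eqref{eq:PpartialR}, rewrite $z_k\,\partial R_d^{(N)}/\partial z_k$ at $\vec z=\mathrm{e}^{-\vec s/N}$ as the expectation $\E\big[\tfrac{M_k^{(N)}}{\sum_j g_j M_j^{(N)}}\,\mathrm{e}^{-\sum_j s_j M_j^{(N)}/N}\,\vec 1_{\{\cdot\}}\vec 1_{\{Z=d\}}\big]$, invoke the uniform bound (the paper uses $1/\min_j g_j$, you use $1/g_k$) together with the continuous mapping theorem, and then identify the limit with $\partial \hat R_d/\partial s_k$. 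Your explicit justification of differentiating $\hat R_d$ under the expectation and your flagging of the discontinuity at $\{\sum_j \hat M_j=0\}$ are points the paper leaves implicit, so if anything your write-up is slightly more careful.
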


\begin{proof}
From Eqn. \eqref{eq:PpartialR},
\begin{equation}\label{eq:limPpartialR}
 \lim_{N\to\infty} P^{(N)}_d(\mathrm{e}^{-\vec{s}/N}) 
 = \lim_{N\to\infty}\sum_{k=1}^K g_k z_k \frac{\p R^{(N)}_d(\vec{z})}{\p z_k}\bigg |_{\vec{z}=\mathrm{e}^{-\vec{s}/N}} + \hat{p}_{\vec{0},d}.
\end{equation}
Note that
\begin{align} \nonumber
 \lim_{N\to\infty} &z_k\frac{\p R^{(N)}_d(\vec{z})}{\p z_k} \bigg |_{\vec{z}=\mathrm{e}^{-\vec{s}/N}} \\
 &= \lim_{N\to\infty} \sum_{\vec{m} \geq \vec{e}_k}\frac{ m_k}{\sum_j g_j m_j}p^{(N)}_{\vec{m},d}{\vec{z^m}}
 \cdot\vec{1}_{\{\sum_j m_j > 0\}} \bigg |_{\vec{z}=\mathrm{e}^{-\vec{s}/N}} \nonumber \\
 &= \lim_{N\to\infty}\sum_{\vec{m}\geq \vec{e}_k} \frac{m_k}{\sum_j g_j m_j}p^{(N)}_{\vec{m},d}\mathrm{e}^{-s_1m_1/N}\cdot\cdots\cdot \mathrm{e}^{-s_Km_K/N}\cdot\vec{1}_{\{\sum_j m_j >0\}} \nonumber \\
 &= \lim_{N\to\infty} \E\left[\frac{M_k^{(N)}}{\sum_j g_j M_j^{(N)}}\mathrm{e}^{-\sum_js_jM_j^{(N)}/N}
 \cdot\vec{1}_{\{\sum_j M_j^{(N)}/N >0\}} \cdot \vec{1}_{\{Z=d\}}\right] \nonumber \\
 &= \E\left[\frac{\hat{M}_k}{\sum_j g_j \hat{M}_j}\mathrm{e}^{-\sum_js_j\hat{M}_j}
 \cdot\vec{1}_{\{\sum_j \hat{M}_j >0\}} \cdot \vec{1}_{\{Z=d\}}\right] \nonumber \\ \label{eq:limRhat}
 &= \frac{\p \hat{R}_d(\vec{s})}{\p s_k}.
\end{align}

The second-to-last step follows from the fact that $\frac{M_k^{(N)}}{\sum_j g_j M_j^{(N)}}\cdot \mathrm{e}^{-\sum_j s_jM_j^{(N)}}\cdot\vec{1}_{\{\sum_jM_j^{(N)}>0\}}$ is upper bounded by $1/\min_j(g_j)$. By the continuous mapping theorem (see Billingsley's \cite{Billingsley}), it converges in distribution to $\frac{\hat{M}_k}{\sum_j g_j \hat{M}_j}\cdot \mathrm{e}^{-\sum_js_j\hat{M}_j}\cdot\vec{1}_{\{\sum_j \hat{M}_j >0\}}$. The environment is not affected by the heavy-traffic scaling. 
Eqns. \eqref{eq:limPpartialR} and \eqref{eq:limRhat} now conclude the proof.
\end{proof}
With the help of Lemma \ref{lem:limPd} we obtain:

\begin{proposition}\label{prop:decouple}
 If $\lim_{N\to\infty} P^{(N)}_d(\mathrm{e}^{-\vec{s}/N})$ exists, the joint queue length distribution is independent of the environment in heavy traffic, that is
 \[
  \hat{P}_d(\vec{s}) = \pi_d\hat{P}(\vec{s}).
 \]
\end{proposition}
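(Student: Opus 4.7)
The plan is to pass to the heavy-traffic limit $N\to\infty$ in the partial-PGF balance equation \eqref{eq:pgfpdeRP}, evaluated at $\vec z = \mathrm{e}^{-\vec s/N}$, and to exploit the fact that both terms on the left-hand side carry a factor $1-z_k = O(1/N)$ while the right-hand side converges to a linear equation in $\hat P_d(\vec s)$ whose coefficients are the generator entries $q_{\ell d}$. The desired identity will then follow from the irreducibility of $Q$.

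Concretely, I would substitute the prelimit arrival rates $\lambda_{k,d}^{(N)}$ and the variables $z_k = \mathrm{e}^{-s_k/N}$ into \eqref{eq:pgfpdeRP} and let $N\to\infty$ term by term. For the first left-hand term, $\lambda_{k,d}^{(N)}(1-z_k)P^{(N)}_d(\mathrm{e}^{-\vec s/N})$, the hypothesis of the proposition guarantees $P^{(N)}_d(\mathrm{e}^{-\vec s/N})\to\hat P_d(\vec s)$, while $\lambda_{k,d}^{(N)}\to\hat\lambda_{k,d}$ and $1-z_k=O(1/N)$, so this term vanishes in the limit. For the second left-hand term, the identity \eqref{eq:limRhat} already established inside the proof of Lemma \ref{lem:limPd} shows that $z_k\,\partial R^{(N)}_d/\partial z_k$ at $\vec z=\mathrm{e}^{-\vec s/N}$ has the finite limit $\partial\hat R_d(\vec s)/\partial s_k$; since $z_k\to 1$, the same limit holds for $\partial R^{(N)}_d/\partial z_k$ itself, and multiplication by $(z_k-1)=O(1/N)$ again forces the term to zero.

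What survives is only the right-hand side, yielding the linear relation
\[
\sum_{\ell=1}^D \hat P_\ell(\vec s)\, q_{\ell d} = 0, \qquad d=1,\ldots,D,
\]
that is, the row vector $(\hat P_1(\vec s),\ldots,\hat P_D(\vec s))$ lies in the left null-space of the generator $Q$. Because $Z$ is irreducible on the finite state space $\{1,\ldots,D\}$, this null-space is one-dimensional and spanned by the invariant distribution $\vec\pi$. Hence there exists a scalar $c(\vec s)$ such that $\hat P_d(\vec s)=c(\vec s)\,\pi_d$ for every $d$. Summing over $d$ and using $\sum_d \pi_d=1$ together with the definition $\hat P(\vec s)=\sum_d \hat P_d(\vec s)$ gives $c(\vec s)=\hat P(\vec s)$, which is the claim.

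The only mildly delicate point I would spell out is that the boundedness of $z_k\,\partial R^{(N)}_d/\partial z_k$ on the relevant domain, together with $z_k\to 1$, really does transfer to boundedness of $\partial R^{(N)}_d/\partial z_k$, so that the $O(1/N)$ prefactors genuinely annihilate both left-hand terms rather than hiding a blow-up; this is precisely what was exploited in Lemma \ref{lem:limPd}, so no new work is required. Beyond that, the whole argument is a one-line limit in \eqref{eq:pgfpdeRP} followed by an elementary appeal to the uniqueness (up to scalars) of the invariant vector of an irreducible generator.
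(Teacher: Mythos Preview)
Your argument is correct and follows essentially the same route as the paper: take the heavy-traffic limit of the balance equation, observe that the left-hand side vanishes because each term carries an $O(1/N)$ prefactor multiplying a bounded quantity, and conclude that the surviving right-hand side places the vector of limits in the left null-space of $Q$, whence proportionality to $\vec\pi$.

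The only difference is organisational: the paper passes through \eqref{eq:multimodPDE}, in which $P_\ell$ has been rewritten via \eqref{eq:PpartialR}, obtains $\vec\nu Q=0$ with $\nu_\ell=\sum_k g_k\,\partial\hat R_\ell/\partial s_k+\hat p_{\vec 0,\ell}$, and then invokes Lemma~\ref{lem:limPd} to identify $\nu_\ell=\hat P_\ell$. By starting instead from \eqref{eq:pgfpdeRP}, where $P_\ell$ still appears explicitly on the right-hand side, you arrive at $\sum_\ell \hat P_\ell(\vec s)\,q_{\ell d}=0$ directly and avoid that small detour; this is a mild streamlining, not a different idea.
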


\begin{proof}
We use the change of variables $z_k=\mathrm{e}^{-s_k}$. Since  
\[
 z_k\frac{\p R_d}{\p z_k}(\vec{z}) \bigg |_{\vec{z}=\mathrm{e}^{-\vec{s}}}
 =  -\frac{\p R_d}{\p s_k}(\mathrm{e}^{-\vec{s}}),
\]
we obtain, by applying the heavy traffic scaling to Eqn. \eqref{eq:multimodPDE},

\begin{align*}
\sum_k \left[ \lambda^{(N)}_{k,d} (1-\mathrm{e}^{-s_k/N})\right. &\left[\sum_{j=1}^K g_j
\frac{- \p R_d^{(N)}}{\p s_j}(\mathrm{e}^{-\vec{s}/N}) + p_{\vec{0},d}^{(N)}\right] 
- \left.\mu_{k,d} g_k (\mathrm{e}^{-s_k/N}-1)
\mathrm{e}^{s_k/N} 
\frac{\p R_d^{(N)}}{\p s_k}(\mathrm{e}^{-\vec{s}/N})\right] \\
 &= \sum_{\ell=1}^D\left[\sum_{k=1}^K g_k \frac{- \p R_\ell^{(N)}}{\p s_k}(\mathrm{e}^{-\vec{s}/N}) + p_{\vec{0},\ell}^{(N)}\right]q_{\ell d}. \\
\end{align*}
With Taylor expansion we obtain 
 
 \begin{align}
\sum_k \left[ \lambda^{(N)}_{k,d} \left(\frac{s_k}{N}  - \frac{s_k^2}{N^2}\right)\right. &\left[\sum_{j=1}^K g_j \frac{- \p R_d^{(N)}}{\p s_j}(\mathrm{e}^{-\vec{s}/N}) + p_{\vec{0},d}^{(N)}\right] 
- \left.\mu_{k,d} g_k \left(\frac{s_k}{N} + \frac{s^2_k}{N^2}\right)
\frac{- \p R_d^{(N)}}{\p s_k}(\mathrm{e}^{-\vec{s}/N})\right] \label{eq:dirLimit} \\
&= \sum_{\ell=1}^D\left[\sum_{k=1}^K g_k 
\frac{- \p R_\ell^{(N)}}{\p s_k}(\mathrm{e}^{-\vec{s}/N}) + p_{\vec{0},\ell}^{(N)}\right]q_{\ell d} + \bigO(N^{-3}).  \nonumber
 \end{align}
 
 Since $\frac{- \p R_d^{(N)}}{\p s_j}$ is bounded (see proof of Lemma \ref{lem:limPd}) and converges to $\frac{\p \hat{R}_d}{\p s_j}$,
we obtain as $N\to\infty$,
  \begin{equation}\label{eq:nuQ}
  \vec{\nu}\cdot [Q]_d = \sum_{\ell=1}^D\left[\sum_{k=1}^K g_k \frac{\p \hat{R}_\ell(\vec{s})}{\p s_k} + \hat{p}_{\vec{0},\ell}\right]q_{\ell d} = 0,\quad \vec{s}\geq \vec{0}, \quad\forall d,
 \end{equation}
where $[Q]_d$ is the $d^{th}$ column of $Q$ and $\vec{\nu}$ is a row vector with $\nu_\ell = \sum_{k=1}^K g_k \frac{\p \hat{R}_\ell(\vec{s})}{\p s_k} + \hat{p}_{\vec{0},\ell}$. This implies that $\vec{\nu}Q = 0$, and since $Q$ is a generator we conclude that 
\[
 \nu_d =  \sum_{k=1}^K g_k \frac{\p \hat{R}_d(\vec{s})}{\p s_k} + \hat{p}_{\vec{0},d} = \pi_d x,
\]
where $x$ does not depend on $d$.
Observe now that by Lemma \ref{lem:limPd}, we have
\begin{align*}
\hat{P}_d(\vec{s}) - \hat{p}_{\vec{0},d} 
 &= \sum_{k=1}^K g_k \frac{\p \hat{R}_d(\vec{s})}{\p s_k} \\
 &= \pi_d x  - \hat{p}_{\vec{0},d}  \\
 &= \E[\vec{1}_{\{Z=d\}}] x - \hat{p}_{\vec{0},d}.
\end{align*}

Since $\hat{P}_d(\vec{s}) = \E\left[\mathrm{e}^{-\sum_js_j\hat{M}_j}\cdot \vec{1}_{\{Z=d\}}\right]$, this implies that $x = \E\left[\mathrm{e}^{-\sum_js_j\hat{M}_j}\right] = \hat{P}(\vec{s})$.
This shows that the environment becomes independent from the joint queue-length process in the heavy-traffic limit.
\end{proof}

The flow equation, Eqn. \eqref{eq:multimodPDE}, simplifies considerably in heavy traffic, as shown in the following lemma.
\begin{lemma}\label{lem:Fkd}
  If $\lim_{N\to\infty} P^{(N)}_d(\mathrm{e}^{-\vec{s}/N})$ exists, then $\hat{R}_d(\vec{s})$ satisfies the following equation:
  \[
   0 = \sum_{k,d} F_{k,d}(\vec{s})\frac{\p \hat{R}_d(\vec{s})}{\p s_k}, \quad \forall \vec{s} \geq \vec{0},
  \]
with $F_{k,d}(\vec{s})$ defined as
\[
 F_{k,d}(\vec{s}):=g_k \left( \sum_j \hat{\lambda}_{j,d}s_j - \mu_{k,d}s_k\right).
\]

\end{lemma}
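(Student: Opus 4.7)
My plan is to start from the balance equation \eqref{eq:multimodPDE}, sum it over $d=1,\dots,D$, substitute $\vec z=\mathrm{e}^{-\vec s/N}$, and pass to the heavy-traffic limit. Summing over $d$ kills the right-hand side entirely: since $Q$ is a generator, $\sum_d q_{\ell d}=0$, so the $Q$-terms disappear. Using Eqn.~\eqref{eq:PpartialR} to identify the innermost bracket on the left as $P^{(N)}_d(\vec z)$, the summed equation reduces to
\[
 \sum_{k,d}\lambda^{(N)}_{k,d}(1-z_k)P^{(N)}_d(\vec z)=\sum_{k,d}\mu_{k,d}g_k(1-z_k)\frac{\p R^{(N)}_d(\vec z)}{\p z_k}.
\]

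Next, I substitute $\vec z=\mathrm{e}^{-\vec s/N}$ and multiply through by $N$. All the required convergences are already available: $N(1-\mathrm{e}^{-s_k/N})\to s_k$, $\lambda^{(N)}_{k,d}\to\hat\lambda_{k,d}$ by Eqn.~\eqref{eq:HTlambda}, $P^{(N)}_d(\mathrm{e}^{-\vec s/N})\to\hat P_d(\vec s)$ by hypothesis, and $z_k\tfrac{\p R^{(N)}_d}{\p z_k}(\mathrm{e}^{-\vec s/N})\to\tfrac{\p\hat R_d}{\p s_k}$ by the bounded-convergence / continuous-mapping argument already carried out in the proof of Lemma~\ref{lem:limPd}; since $z_k=\mathrm{e}^{-s_k/N}\to 1$, the partial derivative with the $z_k$ factor stripped off converges to the same limit. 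Taking $N\to\infty$ yields the heavy-traffic identity
\[
 \sum_{k,d}\hat\lambda_{k,d}s_k\,\hat P_d(\vec s)=\sum_{k,d}\mu_{k,d}g_k s_k\,\frac{\p\hat R_d(\vec s)}{\p s_k}.
\]

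The last step is to eliminate $\hat P_d$. Lemma~\ref{lem:limPd} gives $\hat P_d(\vec s)=\sum_j g_j\tfrac{\p\hat R_d}{\p s_j}+\hat p_{\vec 0,d}$, and the crucial input is that the empty-queue limits vanish, $\hat p_{\vec 0,d}=0$ for every $d$. This is where the workload result pays off: by Theorem~\ref{thm:wht}, $\hat W$ is exponential with positive mean, so $P(\hat W=0)=0$; since $P(W^{(N)}=0)=P(\vec M^{(N)}=\vec 0)=\sum_d p^{(N)}_{\vec 0,d}$ and $0$ is a continuity point of the limit, one obtains $\sum_d\hat p_{\vec 0,d}=0$, and nonnegativity forces each term to be zero. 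Substituting $\hat P_d(\vec s)=\sum_j g_j\tfrac{\p\hat R_d}{\p s_j}$ into the identity above, pulling the sum inside, and renaming the dummy summation index delivers
\[
 \sum_{k,d}g_k\Bigl(\sum_j\hat\lambda_{j,d}s_j-\mu_{k,d}s_k\Bigr)\frac{\p\hat R_d(\vec s)}{\p s_k}=0,
\]
which is precisely $\sum_{k,d}F_{k,d}(\vec s)\tfrac{\p\hat R_d}{\p s_k}=0$.

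The main obstacle is not the algebra, which is largely routine Taylor expansion and index relabelling, but the fact that the collapse $\hat P_d=\sum_j g_j\p\hat R_d/\p s_j$ cannot be extracted from the queue-length flow equations in isolation: it requires ruling out a point mass at the empty state, for which the workload result Theorem~\ref{thm:wht} is the decisive ingredient. Some care is also needed to justify the interchange of limits and derivatives when passing from the prelimit balance equation to its heavy-traffic form.
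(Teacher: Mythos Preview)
Your argument is correct and follows essentially the same route as the paper: sum the balance equation over $d$ to kill the $Q$-terms, scale by $N$, and pass to the limit using the convergence established in Lemma~\ref{lem:limPd}. The only organizational difference is that the paper stays with the $\partial R_d/\partial s_k$ form throughout (working directly from Eqn.~\eqref{eq:dirLimit}), whereas you detour through $P_d$ and then eliminate it via Lemma~\ref{lem:limPd}; the algebra is identical.

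One remark: you invoke Theorem~\ref{thm:wht} to argue $\hat p_{\vec 0,d}=0$, but this is heavier machinery than needed. The paper already records (right after Eqn.~\eqref{eq:pd0cdlimit}) that $\sum_d p^{(N)}_{0,d}c_d=c_\infty(1-\rho_\infty^{(N)})=c_\infty/N\to 0$, and since each $c_d>0$ this forces $p^{(N)}_{\vec 0,d}\to 0$ directly, without appealing to the limiting workload distribution.
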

\begin{proof}
We start by multiplying through Eqn. \eqref{eq:dirLimit} with $N$, followed by summing over $d$. Due to $Q$ being a generator, this eliminates the right hand side with the transition rates $q_{\ell d}$:

\begin{align*}
 \sum_{k,d} &\left[ \lambda^{(N)}_{k,d} \left(s_k - \frac{s_k^2}{N}\right) \left[\sum_{j=1}^K g_j \frac{- \p R_d^{(N)}}{\p s_j}(\mathrm{e}^{-\vec{s}/N}) + p_{\vec{0},d}^{(N)}\right] 
 \right] \\ &- \sum_{k,d}\left[
 \mu_{k,d} g_k \left(s_k + \frac{s^2_k}{N}\right)
\frac{-\p R_d^{(N)}}{\p s_k}(\mathrm{e}^{-\vec{s}/N})\right] + \bigO(N^{-2}) = 0.
\end{align*}

Taking the limit $N\to\infty$ yields
\begin{align}\nonumber
 0 &= \sum_{k,d} \hat{\lambda}_{k,d}s_k \sum_j g_j \frac{\p \hat{R}_d(\vec{s})}{\p s_j} - \sum_{k,d}\mu_{k,d} g_k s_k \frac{\p \hat{R}_d(\vec{s})}{\p s_k}\\ \nonumber
 &= \sum_{k,d} g_k \frac{\p \hat{R}_d(\vec{s})}{\p s_k}\sum_j \hat{\lambda}_{j,d}s_j  - \sum_{k,d}\mu_{k,d} g_k s_k \frac{\p \hat{R}_d(\vec{s})}{\p s_k} \\ \nonumber
 &= \sum_{k,d} g_k \left( \sum_j \hat{\lambda}_{j,d}s_j - \mu_{k,d}s_k\right)\frac{\p \hat{R}_d(\vec{s})}{\p s_k} \\
 &= \sum_{k,d} F_{k,d}(\vec{s})\frac{\p \hat{R}_d(\vec{s})}{\p s_k}. \label{eq:Modlemma2}
\end{align}
\end{proof}

In what follows we focus on
\[
 F_{k,\infty}(\vec{s}):=\sum_d F_{k,d}(\vec{s}) \pi_d = g_k \left( \sum_j \hat{\lambda}_{j,\infty}s_j - \mu_{k,\infty}s_k\right),
\]
and denote its vector counterpart by $\vec{F}_\infty(\vec{s}) = (F_{1,\infty}(\vec{s}),\ldots,F_{K,\infty}(\vec{s}))$.

\section{Queue length distribution in heavy traffic}\label{sec:DPSproof}
We now state and consequently prove our main result about the queue length distribution.
\begin{theorem}\label{thm:main}
 When scaled by $1/N=(1-\rho^{(N)}_\infty)$, the queue-length vector converges in distribution as $(\rho_{1,\infty}^{(N)},\ldots,\rho_{K,\infty}^{(N)}) \to (\hat{\rho}_{1,\infty},\ldots,\hat{\rho}_{K,\infty})$ i.e.,  $\rho^{(N)}_\infty \to 1$, namely
\begin{equation}
\label{eq:ssc}
\frac{1}{N}(M_1^{(N)}, \ldots, M_K^{(N)})\cdot \vec{1}_{\{Z=d\}} \stackrel{\rm{d}}{\to}      \left(\hat{M}_1,\ldots,\hat{M}_K  \right)\cdot \vec{1}_{\{Z=d\}} \stackrel{\rm d}{=} \pi_d\left(\frac{\hat{\rho}_{1,\infty}}{g_1},\ldots,\frac{\hat{\rho}_{K,\infty}}{g_K}\right) \cdot X
\end{equation}
where $\stackrel{\rm d}{\to}$ denotes convergence in distribution  and $X$ is exponentially distributed with mean
\begin{equation}\label{eq:meanX}
 \E X = \frac{\sum_k \hat{\rho}_{k,\infty} / \mu_k - \sum_d c_d\pi_d a_d(1-\hat{\rho}_d )}{c_\infty \sum_k \hat{\rho}_{k,\infty}/(g_k\mu_k)},
\end{equation}
with $\hat{\rho}_d := c_d^{-1} \sum_k \hat{\lambda}_{k,d} /\mu_k$ and $\vec{a} = (a_1,\ldots,a_D)^T$  
being a solution of
\[
 [Q\cdot\vec{a}]_d =    c_d(1 - \hat{\rho}_d).
\]
\end{theorem}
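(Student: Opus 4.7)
The plan is to follow the two-stage structure laid out in Sections~\ref{sec:SSC}--\ref{sec:commonfactor}: first establish the state-space collapse, then identify the common scalar factor via Theorem~\ref{thm:wht}. Throughout I take for granted that the limit $\hat P_d(\vec s)=\lim_{N\to\infty} P_d^{(N)}(\mathrm e^{-\vec s/N})$ exists, and I discharge this assumption at the end by a tightness-and-uniqueness argument.

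\emph{State-space collapse.} Proposition~\ref{prop:decouple} together with the definition of $\hat R_d$ gives $\hat R_d(\vec s)=\pi_d\hat R(\vec s)$, where
$\hat R(\vec s):=\E\!\left[(1-\mathrm e^{-\sum_j s_j\hat M_j})/(\sum_j g_j\hat M_j)\cdot\vec 1_{\{\sum_j\hat M_j>0\}}\right]$.
Substituting into Lemma~\ref{lem:Fkd} and using $\sum_d\pi_d F_{k,d}(\vec s)=F_{k,\infty}(\vec s)$ collapses the $(K\times D)$-term identity to the single first-order linear PDE
\[
 \sum_{k=1}^K g_k\Bigl(\sum_{j=1}^K\hat\lambda_{j,\infty}s_j-\mu_{k,\infty}s_k\Bigr)\frac{\p\hat R(\vec s)}{\p s_k}=0,\qquad \vec s\geq\vec 0.
\]
Trying the ansatz $\hat R(\vec s)=\Phi(\vec v\cdot\vec s)$ reduces this to $\sum_k v_kF_{k,\infty}(\vec s)=0$ for all $\vec s$; matching the coefficient of each $s_j$ yields the algebraic system $(\sum_k g_kv_k)\hat\lambda_{j,\infty}=g_jv_j\mu_{j,\infty}$, whose solutions are (up to positive scalar) $v_k=\hat\rho_{k,\infty}/g_k$. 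Feeding this back through Lemma~\ref{lem:limPd}, and using $\sum_k g_kv_k=\sum_k\hat\rho_{k,\infty}=1$ together with $\hat p_{\vec 0,d}=0$ in heavy traffic (Eqn.~\eqref{eq:pd0cdlimit} with $\hat\rho_\infty=1$), I obtain $\hat P(\vec s)=\Phi'\bigl(\sum_k s_k\hat\rho_{k,\infty}/g_k\bigr)$. Equivalently, $\hat{\vec M}\stackrel{\rm d}{=}(\hat\rho_{1,\infty}/g_1,\ldots,\hat\rho_{K,\infty}/g_K)\cdot X$ jointly for some nonnegative scalar $X$ with LST $\Phi'$. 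Together with the independence $\hat{\vec M}\perp Z$ of Proposition~\ref{prop:decouple}, this yields the right-hand side of Eqn.~\eqref{eq:ssc}, up to identifying the law of $X$.

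\emph{Identifying $X$.} By the memoryless property, conditional on $\vec M^{(N)}$ the residual service requirements are i.i.d.\ exponential with rates $\mu_k$, so $W^{(N)}=\sum_k\sum_{i=1}^{M_k^{(N)}}S_{k,i}^{(N)}$ with $\E[W^{(N)}\mid\vec M^{(N)}]=\sum_k M_k^{(N)}/\mu_k$ and $\VAR(W^{(N)}/N\mid\vec M^{(N)})=\sum_k M_k^{(N)}/(N^2\mu_k^2)=\bigO(1/N)$. A standard LLN argument then gives $W^{(N)}/N-\sum_k M_k^{(N)}/(N\mu_k)\to 0$ in probability, and combined with the state-space collapse this forces the deterministic identity $\hat W=X\sum_k\hat\rho_{k,\infty}/(g_k\mu_k)$. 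Since Theorem~\ref{thm:wht} asserts that $\hat W$ is exponential, so is $X$, with $\E X=\E\hat W/\sum_k\hat\rho_{k,\infty}/(g_k\mu_k)$. Plugging the exponential-mixture moments $h_{d1}=\sum_k\alpha_{k,d}/\mu_k$ and $h_{d2}=2\sum_k\alpha_{k,d}/\mu_k^2$, together with the identity $\hat\lambda_d h_{d1}=c_d\hat\rho_d$, into Eqn.~\eqref{eq:EWinHT} and rearranging produces exactly Eqn.~\eqref{eq:meanX}.

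\emph{Main obstacle.} The genuinely delicate point is to justify the standing hypothesis that $\hat P_d$ exists. I would proceed by tightness: the moment equation at the end of Section~\ref{sec:queuelength} (or the classical heavy-traffic bound $\E M_k^{(N)}=O(1/(1-\rho_\infty^{(N)}))=O(N)$ for DPS) shows that $\E[M_k^{(N)}/N]$ is uniformly bounded in $N$, so $\{\vec M^{(N)}/N\}$ is tight by Markov's inequality. Consequently every subsequence of $(P_d^{(N)}(\mathrm e^{-\vec s/N}))_N$ admits a further subsequence converging pointwise to some candidate $\hat P_d$; the arguments above apply to each such subsequential limit and pin it down uniquely as $\pi_d$ times the stated exponential transform. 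Uniqueness of the subsequential limits then upgrades to convergence of the full sequence and closes the loop.
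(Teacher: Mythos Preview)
Your overall architecture matches the paper's, and the identification of $X$ via the workload (including the variance-conditioning argument linking $W^{(N)}/N$ to $\sum_k M_k^{(N)}/(N\mu_k)$) is fine. The tightness step is also acceptable; the paper gets tightness of $\vec M^{(N)}/N$ from tightness of $W^{(N)}/N$ rather than from moment bounds, but either route works.

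The genuine gap is in the state-space collapse step. You make the ansatz $\hat R(\vec s)=\Phi(\vec v\cdot\vec s)$ and then solve for $\vec v$. This shows that functions of $\sum_k(\hat\rho_{k,\infty}/g_k)s_k$ \emph{do} satisfy the PDE $\sum_k F_{k,\infty}(\vec s)\,\partial_k\hat R=0$, but it does not show they are the \emph{only} solutions. A first-order linear PDE in $K$ variables generically has solutions that are arbitrary functions of $K-1$ independent first integrals of the characteristic flow $\dot{\vec f}=A\vec f$; finding one linear first integral $\vec v\cdot\vec s$ does not pin down $\hat R$ unless you also show there are no other first integrals that $\hat R$ could depend on. Concretely, if $A$ had a second zero eigenvalue, or eigenvalues with nonnegative real part, $\hat R$ could fail to be constant on the hyperplanes $\mathcal H_c$ and your conclusion would be false.

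The paper closes exactly this gap in Lemma~\ref{lem:constHyperplane}. It runs the method of characteristics honestly: (i) the flow preserves each $\mathcal H_c$ because $\vec v\cdot A\vec s=0$; (ii) $\hat R$ is constant along each orbit; and, crucially, (iii) every orbit in $\mathcal H_c$ converges to the \emph{same} point $c\,\vec s^\ast$. Step (iii) requires the spectral analysis of $A$ (via the similarity to the generator $S^T$) showing that $0$ is a simple eigenvalue and all others have strictly negative real part. Continuity of $\hat R$ plus convergence of all orbits to a common limit then forces $\hat R$ to be constant on $\mathcal H_c$, hence a function of $\sum_k(\hat\rho_{k,\infty}/g_k)s_k$ alone. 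Your ansatz replaces this dynamical argument with a guess that happens to be correct but is not justified; to make your write-up complete you need to supply the eigenvalue/convergence argument of step (iii).
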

We will prove this theorem in the two following subsections, first showing in Section \ref{sec:SSC} the state-space collapse observed in Eqn. \eqref{eq:ssc} and then in Section \ref{sec:commonfactor} we will show that $X$ is exponentially distributed with the mean given by Eqn. \eqref{eq:meanX}.

\subsection{State-space collapse}\label{sec:SSC}
The first part of the proof of Theorem \ref{thm:main} is the state-space collapse. In this section, we assume $\lim_{N\to\infty} P_d^{(N)}(\mathrm{e}^{-\vec{s}/N})$ exists.

Observe that due to Proposition \ref{prop:decouple},

\begin{align}
\hat{R}_d(\vec{s}) &= \E\left[\frac{1 - \mathrm{e}^{-\sum_k s_k \hat{M}_k}}{\sum_k g_k \hat{M}_k}\cdot \vec{1}_{\{\sum_k \hat{M}_k>0\}}\cdot\vec{1}_{\{Z=d\}}\right] \nonumber \\
&= \E\left[\frac{1 - \mathrm{e}^{-\sum_k s_k \hat{M}_k}}{\sum_k g_k \hat{M}_k}\cdot \vec{1}_{\{\sum_k \hat{M}_k>0\}}\right]\cdot \pi_d \nonumber \\
&=: \hat{R}(\vec{s})\pi_d, \label{eq:Rinfty}
\end{align}

where the last equation defines $\hat{R}(\vec{s})$, which is independent of $d$. We now derive some properties of $\hat{R}(\vec{s})$.
\begin{lemma}\label{lem:constHyperplane}
$\hat{R}(\vec{s})$ is constant on a $(K-1)$-dimensional hyperplane $\mathcal{H}_c$, where 
\[
 \mathcal{H}_c := \left\{\vec{s} \geq \vec{0} : \sum_k \frac{\hat{\rho}_{k,\infty}}{g_k} s_k = c \right\},\quad c>0.
\]
\end{lemma}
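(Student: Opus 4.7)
The plan is to convert the identity of Lemma~\ref{lem:Fkd} into a first-order linear PDE for the scalar $\hat R$ and then apply the method of characteristics. Using Eqn.~\eqref{eq:Rinfty}, $\p \hat R_d(\vec s)/\p s_k = \pi_d\, \p \hat R(\vec s)/\p s_k$, so Lemma~\ref{lem:Fkd} collapses to
\[
 \sum_{k=1}^K F_{k,\infty}(\vec s)\,\frac{\p \hat R(\vec s)}{\p s_k} = 0, \qquad \vec s \geq \vec 0,
\]
with $F_{k,\infty}(\vec s) = g_k\bigl(\sum_j \hat\lambda_{j,\infty} s_j - \mu_{k,\infty} s_k\bigr)$ as introduced at the end of Section~\ref{sec:HT}. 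Hence $\hat R$ is constant along every trajectory of the autonomous ODE $\dot s_k(t) = F_{k,\infty}(\vec s(t))$.

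Next I would verify that every hyperplane $\mathcal{H}_c$ is invariant under this flow and contains a unique attracting fixed point. Differentiating $\sum_k (\hat\rho_{k,\infty}/g_k) s_k(t)$ along the ODE collapses, via $\sum_k \hat\rho_{k,\infty} = 1$, to $\sum_j \hat\lambda_{j,\infty} s_j - \sum_k \hat\lambda_{k,\infty} s_k = 0$, so this linear functional is a first integral and $\mathcal{H}_c$ is invariant. The positive orthant is forward-invariant because $\dot s_k = g_k \sum_j \hat\lambda_{j,\infty} s_j \geq 0$ whenever $s_k = 0$, so trajectories that start in the compact simplex $\mathcal{H}_c \cap [0,\infty)^K$ remain there. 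The fixed-point equations $\sum_j \hat\lambda_{j,\infty} s_j = \mu_{k,\infty} s_k$ have the unique interior solution $\vec s^* = (C/\mu_{k,\infty})_k$, with $C$ determined by $\vec s^* \in \mathcal{H}_c$.

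Finally, I would show that every trajectory in $\mathcal{H}_c \cap [0,\infty)^K$ converges to $\vec s^*$ by a spectral analysis of the (linear!) dynamics. The matrix $A_{kj} = g_k \hat\lambda_{j,\infty} - \delta_{kj}\, g_k \mu_{k,\infty}$ satisfies $A\vec s^* = \vec 0$, while any other eigenpair $(\lambda,\vec y)$ with $\sum_j \hat\lambda_{j,\infty} y_j \neq 0$ is forced by $(g_k\mu_{k,\infty}+\lambda) y_k = g_k\sum_j \hat\lambda_{j,\infty} y_j$ to obey
\[
 \sum_{k=1}^K \frac{g_k \hat\lambda_{k,\infty}}{g_k \mu_{k,\infty} + \lambda} = 1.
\]
The imaginary part of the left-hand side at $\lambda = a + \mathrm{i} b$ vanishes only for $b = 0$, and on the real axis the function is strictly decreasing between consecutive poles with value $1$ at $\lambda = 0$, so the remaining $K-1$ roots are real and strictly negative; the degenerate case $\sum_j \hat\lambda_{j,\infty} y_j = 0$ contributes only eigenvalues $-g_k \mu_{k,\infty} < 0$. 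Hence $\vec s(t) \to \vec s^*$, and combining this with the continuity of $\hat R$ (its defining integrand is uniformly bounded by $(\max_k s_k)/(\min_k g_k)$ on compact sets of $\vec s$, so dominated convergence applies) yields $\hat R(\vec s) = \hat R(\vec s^*)$ for every $\vec s \in \mathcal{H}_c$. The main obstacle is this global-stability step; once the PDE has been averaged against $\vec\pi$, the Markov modulation disappears cleanly and the spectral computation reduces to the one underlying the non-modulated DPS heavy-traffic analysis in \cite{Rege96, Verloop11}.
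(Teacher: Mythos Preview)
Your proof is correct and follows the same three-step architecture as the paper: reduce Lemma~\ref{lem:Fkd} via $\hat R_d=\pi_d\hat R$ to the scalar PDE $\sum_k F_{k,\infty}(\vec s)\,\p\hat R/\p s_k=0$, show that each $\mathcal H_c$ is invariant under the characteristic flow $\dot{\vec s}=A\vec s$, and prove that every trajectory in $\mathcal H_c$ converges to a unique equilibrium. The only genuine difference is in the spectral step. The paper conjugates $A$ by the diagonal matrix $D=\mathrm{diag}(\hat\rho_{k,\infty}/g_k)$ and observes that $(DAD^{-1})^T$ is the generator of an irreducible finite-state Markov chain, so Perron--Frobenius theory immediately gives that $0$ is a simple eigenvalue and all others have strictly negative real part. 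You instead exploit the rank-one structure $A=\mathrm{diag}(g_k)\vec 1\,\hat{\vec\lambda}^T-\mathrm{diag}(g_k\mu_{k,\infty})$ to derive the scalar characteristic equation $\sum_k g_k\hat\lambda_{k,\infty}/(g_k\mu_{k,\infty}+\lambda)=1$ and locate its roots directly; this is equally valid and in fact yields a little more (all eigenvalues are real). Your explicit dominated-convergence argument for the continuity of $\hat R$, needed to pass from ``constant along trajectories'' to ``constant on $\mathcal H_c$'', is a detail the paper leaves implicit.
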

\begin{proof}
 We follow closely the steps of the proof of Lemma 3 in \cite{Verloop11}. The proof has 3 steps: (i) Show that $F_{k,\infty}(\vec{s})$ is parallel to the hyperplane. Hence, any flow corresponding to $F_{k,\infty}$ that starts in the plane, stays in the plane. (ii) Show that $\hat{R}(\vec{s})$ is constant along each flow in the hyperplane and
 (iii) show that each flow in the hyperplane converges to a unique point. This implies that $\hat{R}(\vec{s})$ is constant on the hyperplane.\\ 
 
{\bf (i) $\vec{F}_\infty(\vec{s})$ is parallel to $\mathcal{H}_c$}\\
Observe that with $1 = \sum_k \hat{\rho}_{k,\infty}$ and $\hat{\rho}_{k,\infty} = \hat{\lambda}_{k,\infty}/\mu_{k,\infty}$,
\begin{align*}
 \sum_k\frac{\hat{\rho}_{k,\infty}}{g_k} F_{k,\infty}(\vec{s}) &= \sum_k \hat{\rho}_{k,\infty} \left( \sum_j \hat{\lambda}_{j,\infty}s_j - \mu_{k,\infty}s_k\right) \\
 &=  \sum_j \hat{\lambda}_{j,\infty}s_j - \sum_k \hat{\rho}_{k,\infty}\mu_{k,\infty} s_k \\
 &= \sum_k \hat{\lambda}_{k,\infty}s_k - \sum_k \hat{\lambda}_{k,\infty}s_k \\
 &= 0.
\end{align*}

This indicates that the $K$-dimensional vector $F_\infty(\vec{s})$ is parallel to the hyperplane.\\

{\bf (ii) $\hat{R}(\vec{s})$ is constant along flows in $\mathcal{H}_c$}\\
For each state $\vec{s}\geq\vec{0}$, there exists a unique flow $\vec{f}(u) = (f_1(u),\ldots,f_K(u))^T$ parametrized by $u\geq 0$, such that
\begin{equation}\label{eq:flow}
  \vec{f}(0) = \vec{s} \quad \text{and} \quad \frac{{\rm d}f_k(u)}{{\rm d}u} = F_{k,\infty}(\vec{f}(u)).
\end{equation}
Due to (i), any flow that starts in $\mathcal{H}_c$, stays in $\mathcal{H}_c$. Now,
\begin{align*}
 \frac{{\rm d} \hat{R}(\vec{f}(u))}{{\rm d} u} &= \sum_{k=1}^K \frac{{\rm d}f_k(u)}{{\rm d}u}\cdot \frac{\p \hat{R}(\vec{s})}{\p s_k}\bigg |_{\vec{s}=\vec{f}(u)} \\
 &= \sum_{k=1}^K F_{k,\infty}(\vec{f}(u))\cdot \frac{\p \hat{R}(\vec{s})}{\p s_k}\bigg |_{\vec{s}=\vec{f}(u)} \\
 &= \sum_{k=1}^K\sum_{d=1}^D F_{k,d}(\vec{f}(u))\pi_d\cdot \frac{\p \hat{R}(\vec{s})}{\p s_k}\bigg |_{\vec{s}=\vec{f}(u)} \\
 &= \sum_{k=1}^K\sum_{d=1}^D F_{k,d}(\vec{f}(u))\cdot \frac{\p \hat{R}_d(\vec{s})}{\p s_k}\bigg |_{\vec{s}=\vec{f}(u)} \\
  &= 0, \qquad \text{by Eqn. \eqref{eq:Modlemma2}},
\end{align*}
implying that $\hat{R}(\vec{f}(u))$ is constant along each flow $\vec{f}(u)$ which lies in $\mathcal{H}_c$.\\

{\bf (iii) Each flow in $\mathcal{H}_c$ converges to a unique point}\\
Here we first write the flow specifications in a vector-matrix form, then show that one eigenvalue of that matrix is zero with eigenvector $\vec{s}^*\in \mathcal{H}_1$, and the other eigenvalues are negative, and thus we can write $\vec{f}(u) = c\cdot \vec{s}^* + \vec{g}(u)$ where $\lim_{u\to\infty}\vec{g}(u) = 0$.\\
Eqn. \eqref{eq:flow} can be written in matrix-vector form as
\[
 \vec{f}'(u) = A \vec{f}(u),
\]
with
\[
 A = \left( \begin{array}{cccc}
       g_1(\hat{\lambda}_{1,\infty} - \mu_{1,\infty}) & g_1\hat{\lambda}_{2,\infty} & \cdots & g_1\hat{\lambda}_{K,\infty} \\
       g_2\hat{\lambda}_{1,\infty} & g_2(\hat{\lambda}_{2,\infty} - \mu_{2,\infty}) & \cdots & g_2\hat{\lambda}_{K,\infty} \\
       \vdots & & \ddots & \vdots \\
       g_K\hat{\lambda}_{1,\infty} & \cdots & & g_K(\hat{\lambda}_{K,\infty} - \mu_{K,\infty})
            \end{array}
\right).
\]
Let $D$ be the diagonal matrix with $d_i=\hat{\rho}_{i,\infty}/g_i$ on the diagonal. Then with 
\[
S:= DAD^{-1} =  \left( \begin{array}{cccc}
       g_1(\hat{\lambda}_{1,\infty} - \mu_{1,\infty}) & g_2\frac{\hat{\rho}_{1,\infty}}{\hat{\rho}_{2,\infty}}\hat{\lambda}_{2,\infty} & \cdots & g_K\frac{\hat{\rho}_{1,\infty}}{\hat{\rho}_{K,\infty}}\hat{\lambda}_{K,\infty} \\
       g_1\frac{\hat{\rho}_{2,\infty}}{\hat{\rho}_{1,\infty}}\hat{\lambda}_{1,\infty} & g_2(\hat{\lambda}_{2,\infty} - \mu_{2,\infty}) & \cdots & g_K\frac{\hat{\rho}_{2,\infty}}{\hat{\rho}_{K,\infty}}\hat{\lambda}_{K,\infty} \\
       \vdots & & \ddots & \vdots \\
       g_1\frac{\hat{\rho}_{K,\infty}}{\hat{\rho}_{1,\infty}}\hat{\lambda}_{1,\infty} & \cdots & & g_K(\hat{\lambda}_{K,\infty} - \mu_{K,\infty})
            \end{array}
\right),
\]
$S^T$ is a generator corresponding to a finite-state Markov chain. 

From the proof of Lemma 4 in \cite{Verloop11}, it is easily seen that the Markov chain corresponding to $S^T$ is irreducible (since we assume that all $\lambda_{k,d}>0$ for all $k$ and at least one $d$). Retracing the arguments stated there, for completeness, it follows that this Markov chain has a unique equilibrium distribution (column) vector, $\vec{\eta}$, such that $\vec{\eta}^TS^T=0$. In particular, 0 is an eigenvalue with multiplicity one and all other eigenvalues have a strictly negative real part, see \cite{ASMU}. Since the eigenvalues of $S^T$ and $A$ are the same, 0 is also an eigenvalue of $A$ with corresponding right eigenvector $\vec{s}^* = D^{-1}\vec{\eta}$, $\vec{s}^*\geq\vec{0}, \vec{s}^*\in \mathcal{H}_1$. The solution of the linear system $\vec{f}'(u) = A \vec{f}(u), \vec{f}(0)\in \mathcal{H}_c$ can now be written as the sum of the homogeneous and the particular solution, i.e. $\vec{f}(u) = c\cdot \vec{s}^* + \vec{g}(u)$, where $\lim_{u\to\infty} \vec{g}(u) = \vec{0}$. This implies that all the flows in $\mathcal{H}_c$ converge to one common point $c\cdot \vec{s}^*$.\\
Combining (i), (ii) and (iii), we conclude that the function $\hat{R}(\vec{s})$ is constant on $\mathcal{H}_c$.
\end{proof}

As a consequence of Lemma \ref{lem:constHyperplane}, the function $\hat{R}(\vec{s})$ depends on $\vec{s}$ only through the sum $\sum_{k=1}^K (\hat{\rho}_{k,\infty}/g_k)s_k$. Therefore, there exists a function $\hat{R}^*: \R\to\R$ such that $\hat{R}(\vec{s}) = \hat{R}^*(\sum_{k=1}^K (\hat{\rho}_{k,\infty}/g_k)s_k)$. Then 
\[
\frac{\p}{\p s_k}\hat{R}(\vec{s}) = \frac{\hat{\rho}_{k,\infty}}{g_k} \frac{{\rm d} \hat{R}^*(v)}{{\rm d} v}\bigg|_{v=\sum_{k=1}^K (\hat{\rho}_{k,\infty}/g_k)s_k},  
\]
so we obtain
\begin{align}\nonumber
 \E[\mathrm{e}^{-\sum_{k=1}^K s_k \hat{M}_k}] \nonumber
 &= \lim_{N\to\infty}\sum_{d=1}^D P^{(N)}_d(\mathrm{e}^{-\vec{s}/N}) \\ \nonumber
 &= \sum_{d=1}^D\sum_{k=1}^K g_k \frac{\p \hat{R}_d(\vec{s})}{\p s_k} \\ \nonumber
 &= \sum_{d=1}^D\sum_{k=1}^K g_k \frac{\p \hat{R}(\vec{s})}{\p s_k}\pi_d \\ \nonumber
 &= \sum_{k=1}^K \hat{\rho}_{k,\infty} \frac{{\rm d} \hat{R}^*(v)}{{\rm d} v}\bigg|_{v=\sum_{k=1}^K (\hat{\rho}_{k,\infty}/g_k)s_k} \\ \label{eq:dRstarv}
 &= \frac{{\rm d} \hat{R}^*(v)}{{\rm d} v}\bigg|_{v=\sum_{k=1}^K (\hat{\rho}_{k,\infty}/g_k)s_k},
\end{align}
which only depends on $v=\sum_{k=1}^K (\hat{\rho}_{k,\infty}/g_k)s_k$. Since we also have
\begin{align*}
 \E[\mathrm{e}^{-\sum_{k=1}^K s_k \hat{M}_k}] &= \E\left[\mathrm{e}^{-\frac{g_1}{\hat{\rho}_{1,\infty}} v \hat{M}_1} \cdot 
 \mathrm{e}^{-\frac{\hat{\rho}_{2,\infty}}{g_2}s_{2} \left(\frac{g_2}{\hat{\rho}_{2,\infty}} \hat{M}_2 - \frac{g_1}{\hat{\rho}_{1,\infty}} \hat{M}_1\right)} \right. \\
 &\cdot \cdots \cdot 
 \left. \mathrm{e}^{-\frac{\hat{\rho}_{K,\infty}}{g_K}s_{K} \left(\frac{g_K}{\hat{\rho}_{K,\infty}} \hat{M}_K - \frac{g_1}{\hat{\rho}_{1,\infty}} \hat{M}_1\right)}\right],
 \end{align*}
 this together with Eqn. \eqref{eq:dRstarv} implies that $\left(\frac{g_j}{\hat{\rho}_{j,\infty}} \hat{M}_j - \frac{g_1}{\hat{\rho}_{1,\infty}} \hat{M}_1\right)=0$, for all $j=1,\ldots, K$. Thus
 $(g_k/\hat{\rho}_{k,\infty})\hat{M}_k \stackrel{\rm d}{=}(g_j/\hat{\rho}_{j,\infty})\hat{M}_j$, for all $k,j$, almost surely. Combining this finding with that of Eqn. \eqref{eq:nuQ}, we obtain Eqn. \eqref{eq:ssc} 
with $X$ distributed as $(g_1/\hat{\rho}_{1,\infty})\hat{M}_1$.

\begin{remark}[Continuously modulated service requirements]
\label{rem:split}
 In Section \ref{sec:workload} we saw that the critical load is indeed reached when $\rho_\infty \to 1$, since then $p_{0,d}\to 0$.
This indicates that $(1-\rho_\infty)$ is the right heavy-traffic scaling when $\mu_{k,d} = \mu_k c_d$. This is less clear for a general $\mu_{k,d}$, that is, for continuously modulated service requirements, where the environment can influence the departure rate of customers present in the system. 
For that setting, the workload process is no longer independent of the employed scheduling discipline, since the decision on which class to serve impacts the rate at which customers leave.
We are not aware of any results on workload and waiting time distributions where the service distribution is a general function of both class and environment.

The majority of the preceding queue length results in this paper can however be proven without the restriction of the product form, i.e., for continuously modulated service requirements.
  The traffic intensity for this variant is defined as for the multi-class model above, only this time one cannot split the average class-$k$ service rate into $\mu_{k,\infty} = \mu_k c_\infty$. 
  The traffic intensity per class $k$, 
  $\rho_{k,\infty} = \lambda_{k,\infty}/\mu_{k,\infty}$, 
  is in line with the 
  Markov-modulated single-server queues.
Assuming there exists a scaling $f(N)$ such that  
$f(N) (M_1,\ldots, M_K)\vec{1}_{\{Z=d\}}$ converges in distribution, it can be shown
    that the empty probabilities $p_{\vec{0},d}$ vanish in heavy traffic as $N\to\infty$, for a general $\mu_{k,d}$. This property then follows from Proposition~\ref{prop:decouple} without relying on the workload results from Section \ref{sec:workload} and the product form assumed there.
Furthermore, under this assumption, 
 all results in Section~\ref{sec:SSC} hold, implying that a state-space collapse will appear. In other words, we can prove the first half of Theorem~\ref{thm:main}. However, we do not know what the distribution of the common factor $X$ will be. 
\end{remark}

\subsection{Distribution of the common factor}\label{sec:commonfactor}
In order to prove that the limiting queue length distribution exists and to find the common factor of the queue length distribution in heavy traffic, the random variable $X$, we make use of the results on the workload of the total system. 
From \cite{Verloop11} and Eqn. \eqref{eq:ssc} we have that
\begin{equation}\label{eq:WX}
 \hat{W} \stackrel{\rm d}{=} \sum_{k=1}^K \frac{\hat{M_k}}{\mu_k} = X\cdot \sum_{k=1}^K \frac{\hat{\rho}_{k,\infty}}{g_k\mu_k}.
\end{equation}
In order to apply the workload result of Section~\ref{sec:workload}, we first derive the service requirement of an arbitrary customer while being in state $d$. If $H_k(\cdot)$ is the distribution function of a class-$k$ customer's service requirement, then the probability of a class-$k$ customer arriving and requiring service not exceeding $x$ is $\alpha_{k,d}H_k(x)$. Summing over $k$ now yields the desired distribution,
\[
 H_d(x) := \sum_{k=1}^K \alpha_{k,d} H_k(x).
\]
The overall service requirement distribution thus depends on the state of the environment at its arrival. With exponential service requirements, the corresponding LST is given by
\begin{equation}\label{eq:hd}
h_d(s) = \sum_{k=1}^K \frac{\alpha_{k,d} \mu_k}{\mu_k + s}, \quad s\geq 0,
\end{equation}
and the first and second moment are given by
\begin{equation}\label{eq:momentshd}
 h_{d1} = \sum_k \frac{\alpha_{k,d}}{\mu_k} 
 , \quad h_{d2} = 2\sum_k \frac{\alpha_{k,d}}{\mu_k^2}.
\end{equation}
We can now apply the result of Theorem \ref{thm:wht} with moments as given in Eqn. \eqref{eq:momentshd}. 
Hence, we have that $\hat{W}$ is exponentially distributed with mean
\[
\E \hat{W} = c_\infty^{-1}\left(\sum_k \hat{\rho}_{k,\infty}/\mu_k - \sum_d c_d\pi_d a_d(1-\hat{\rho}_d  )\right),
\]
where $\vec{a}$ is a solution of $[Q\cdot \vec{a}]_d 
 =   c_d - \hat \lambda_d \sum_k \frac{\alpha_{k,d}}{\mu_k} = c_d (1-\hat \rho_d)$. 
Along with Eqn. \eqref{eq:WX} this yields the mean of the exponential random variable $X$:
\begin{align} \nonumber
 \E X &= \frac{\E \hat{W}}{\sum_k \hat{\rho}_{k,\infty}/(g_k\mu_k)}\\
 &= \frac{\sum_k \hat{\rho}_{k,\infty} / \mu_k - \sum_d c_d \pi_d a_d(1-\hat{\rho}_d  )}{c_\infty \sum_k \hat{\rho}_{k,\infty}/(g_k\mu_k)}. \label{eq:EX}
 \end{align}
The first term of the numerator is in accordance with the results of \cite{Rege96} and \cite{Verloop11}, the second term is a result of the random environment.

The results in Sections \ref{sec:HT} and \ref{sec:SSC} are based on the assumption that  
$\lim_{N\to\infty} \frac{1}{N}\vec{M}\cdot\vec{1}_{\{Z=d\}}$
exists. Since the scaled workload is tight, see Section \ref{sec:workload}, so is the scaled queue length. Then, by Prohorov's theorem (\cite{Billingsley}) there exists a subsequence of $N$ such that $\frac{1}{N}M_k$ converges in distribution, and hence for this subsequence $\lim_{N\to\infty} P^{(N)}_d(\mathrm{e}^{-\vec{s}/N})$ exists. Since each converging subsequence yields the same limit, the limit itself exists (see corollary page 59 in~\cite{Billingsley}),
i.e. 
$\frac{1}{N}(\vec{M}, Z=d) \stackrel{\rm d}{\to} \vec{\hat{M}}\cdot\vec{1}_{\{Z=d\}}$,
as $N\to\infty$, with the limiting vector as in Eqn. \eqref{eq:ssc}.

This concludes the proof of Theorem \ref{thm:main}.

\section{Conclusion and future work}\label{sec:conclusion}
We first studied the workload for a queue with modulated arrivals, service requirements and service capacity, and derived that the scaled workload converges to an exponentially distributed random variable in heavy traffic. The workload results obtained are valid for any service distribution and for any service discipline which does not depend on the environment.
We then focussed on the special setting of a multi-class queue under the DPS policy 
and showed that the joint queue length distribution for such a system 
undergoes a state-space collapse in heavy traffic. Under the scaling of $(1-\rho_\infty)$, the vector-valued limiting distribution is independent of the modulating environment and converges in distribution to a one-dimensional random variable times a deterministic vector. In this derivation, the distribution of the scaled workload is a key quantity. With this we extend known results about the DPS queue to a Markov-modulated setting.

Clearly an interesting question for future consideration is whether the
state-space collapse for the DPS policy carries over to continuously modulated service requirements, as discussed in Remark \ref{rem:split}.
Another open question concerns the 
characterization of the moments of the queue lengths for the modulated DPS queue,
outside of heavy traffic. Last but not least, modulating the weights of the DPS would open the possibility of dynamical scheduling based on the environment.
The latter would be a study on its own, as already the stability
conditions will no longer be independent of the weights of the DPS
policy.

\section*{Appendix: Proof of Theorem \ref{thm:wht}}

The proof Theorem \ref{thm:wht} is based on Theorem 4 in \cite{Dimi11}, which can be adapted to our model as follows:

We start with notation and some preliminaries.
Let $\Lambda = \text{diag}(\lambda_1,\ldots,\lambda_D)$, $\bar{H}(s) = \text{diag}(1 - h_1(s),\ldots,1 - h_D(s))$, $C= \text{diag}(c_1,\ldots,c_D)$ and $\vec{p}_0 = (p_{0,1},\ldots,p_{0,D})$. Furthermore $\bar{H}_1$ and $\bar{H}_2$ are the diagonal matrices corresponding to the moments $h_{d1}$ and $h_{d2}$, respectively, for $d=1,\ldots,D$. 
Recall Eqn. \eqref{eq:Qa}, $[Q\cdot \vec{a}]_d = c_d - \lambda_d h_{d1} -c_\infty(1-\rho_\infty)$. We will now construct a partial inverse of $Q$ to make it easier to find a vector $\vec{a}$ which solves this equation. Let $Q_1$ and $R$ be matrices such that 
\[ 
Q_1=
\begin{pmatrix}
q_{22} & q_{23} & \cdots & q_{2D} \\
\vdots & & &\\
q_{D2} & q_{D3} & \cdots & q_{DD}
\end{pmatrix}, \quad
R=
\begin{pmatrix}
0 & 0 & \cdots & 0 \\
0 &&&\\
\vdots & Q_1^{-1} &\\
0 & &&
\end{pmatrix}.
\]
Then $\det Q_1 \neq 0$ and due to $Q$ being a generator (for more details see \cite{Dimi11}), we have
\[
 QR = \begin{pmatrix}
       0 & \frac{-\pi_2}{\pi_1} & \frac{-\pi_3}{\pi_1} & \cdots & \frac{-\pi_D}{\pi_1} \\
       0 & 1 & 0 & \cdots & 0 \\
       0 & 0 & 1 & 0 & \cdots \\
       \vdots & \vdots & \vdots & \ddots & \ddots \\
       0 & 0 & \cdots & \cdots & 1
      \end{pmatrix}.
\]
It follows that for any vector $\vec{x}$, it holds that 
\begin{equation}\label{eq:yQR}
\vec{x}QR = \vec{x} - \frac{x_1}{\pi_1} \vec{\pi}.
\end{equation}
Then it can be verified with straightforward calculations that
\begin{align} \nonumber
\vec{a} &= (a_1,\ldots,a_D) = R(c_1 - \lambda_1 h_{11} - c_\infty(1-\rho_\infty),\ldots,c_D - \lambda_D h_{D1} - c_\infty(1-\rho_\infty))^T \\ \nonumber
&= R[C - \Lambda \bar{H}_1]\vec{e} - c_\infty(1-\rho_\infty)R\vec{e} \\  
&= R[C - \Lambda \bar{H}_1]\vec{e} - \vec{r} \label{eq:avec}
\end{align}
is a possible solution vector, with $\vec{r} := c_\infty(1-\rho_\infty)R\vec{e}$.
 
 Define the vector $\vec{\varphi} = (\varphi_1(s),\ldots,\varphi_D(s))$ and write 
Eqn. \eqref{eq:phiLST} 
 in matrix-vector terms,
\begin{equation}\label{eq:phiQmatvec}
 \vec{\varphi}(s) Q = \vec{\varphi}(s)[\Lambda \bar{H}(s) - sC] + s \vec{p}_0 C.
 \end{equation}

Observe that, according to 
Eqn. \eqref{eq:pd0cdlimit} 
\[
\vec{p}_0 C\vec{e} = c_\infty(1-\rho_\infty).
\]
Now multiply from the right both sides of the new vector-matrix equation, Eqn. \eqref{eq:phiQmatvec}, with a $D$-dimensional vector of 1's, $\vec{e}$, to obtain
\begin{equation}\label{eq:DimiEqn8}
\vec{\varphi}(s)[\Lambda \bar{H}(s) - sC]\vec{e} + s c_\infty(1-\rho_\infty) = 0.
\end{equation}

Multiply from the right both sides of Eqn. \eqref{eq:phiQmatvec} with the matrix $R$ to get
\[
\vec{\varphi}(s)QR = \vec{\varphi}(s)[\Lambda \bar{H}(s) - sC]R + s \vec{p}_0 C R.
\]
Rewrite this equation by using the property of Eqn. \eqref{eq:yQR} to obtain
\begin{equation}\label{eq:DimiEqn9}
\vec{\varphi}(s) = \frac{\varphi_1(s)}{\pi_1} \vec{\pi} + \vec{\varphi}(s)[\Lambda \bar{H}(s) - sC]R + s \vec{p}_0 C R.
\end{equation}
Iterate Eqn. \eqref{eq:DimiEqn9} with itself by inserting $\vec{\varphi}(s)$ into the right hand side of the equation to obtain, after some algebraic transformations,
\begin{equation}\label{eq:DimiEqn10}
\vec{\varphi}(s) =  \frac{\varphi_1(s)}{\pi_1} \vec{\pi}[I + G(s)R] + \vec{y}(s)
\end{equation}
with $G(s) := \Lambda \bar{H}(s) - sC$ and 
\begin{equation}\label{eq:ys}
\vec{y}(s) := \vec{\varphi}(s)[G(s)R]^2 + s\vec{p}_0 CR[G(s)R + I].
\end{equation}

Substitute Eqn. \eqref{eq:DimiEqn10} into Eqn. \eqref{eq:DimiEqn8} to obtain an expression for $\varphi_1(s)$,
\begin{align} \nonumber
 0 &= \left[\frac{\varphi_1(s)}{\pi_1} \vec{\pi}[I + G(s)R] + \vec{y}(s)\right] \cdot G(s)\vec{e} + s c_\infty(1-\rho_\infty) \\ \nonumber
 &= \frac{\varphi_1(s)}{\pi_1}\left[\vec{\pi} G(s)\vec{e} + \vec{\pi}G(s)RG(s)\vec{e} \right] + \vec{y}(s)G(s)\vec{e} + s c_\infty(1-\rho_\infty) \\ \label{eq:AB1B2}
 &= \frac{\varphi_1(s)}{\pi_1}\left[B_2(s) + B_3(s)\right] + B_1(s)
\end{align}
with $B_1(s) = \vec{y}(s)G(s)\vec{e} + s c_\infty(1-\rho_\infty)$, $B_2(s) = \vec{\pi}G(s)\vec{e}$ and $B_3(s) = \vec{\pi}G(s)RG(s)\vec{e}$.\\

The next step is to insert the scaling $s \mapsto s/N$ for each term. Recall that using the heavy traffic parametrization introduced in Section \ref{sec:model}, we have $(1-\rho_\infty)=1/N$. 
Now observe that, as $N\to\infty$,
\[
\frac{\bar{H}(s/N)}{s/N} \to \bar{H}_1, \quad \frac{\bar{H}_1s/N - \bar{H}\left(s/N\right)}{(s/N)^2} \to \frac{\bar{H}_2}{2}.
\]
Therefore the limit
\[
 \frac{G(s/N)}{s/N} \to \Lambda \bar{H}_1 - C,
\]
is a constant 
and since $|\vec{\varphi}(s/N)| \leq 1$ and $\vec{p}^{(N)}_0 \to 0$ (see Eqn. \eqref{eq:pd0cdlimit}), we have
\begin{align*}
 \frac{\vec{y}(s/N)}{s/N} &= \vec{\varphi}(s/N)\frac{[G(s/N)R]^2}{s/N} + \vec{p}^{(N)}_0 CR[G(s/N)R + I] \\
 &= \vec{\varphi}(s/N)\left(\frac{s}{N}\right)\left[\frac{G(s/N)}{s/N}R\right]^2 + \vec{p}^{(N)}_0 CR[G(s/N)R + I] \\
 &\to 0,
\end{align*}
as $N\to\infty$.
Combining the above we obtain
\begin{align*}
B_1(s/N) &= \vec{y}(s/N)G(s/N)\vec{e} + sc_\infty(1-\rho_\infty)/N\\
&= sc_\infty(1 - \rho_\infty)/N + o(N^{-2}) = sc_\infty /N^2 + o(N^{-2}).
\end{align*}

Then
\begin{align*}
B_2(s/N) &= \vec{\pi} \left[\Lambda \bar{H}\left(s/N\right) - sC/N \right]\vec{e} \\
&= \vec{\pi} \Lambda \left[\bar{H}\left(s/N\right) - \bar{H}_1s/N\right]\vec{e}
 + \vec{\pi}\left[\Lambda \bar{H}_1s/N - sC/N \right]\vec{e} \\
 &= \vec{\pi} \Lambda \frac{\bar{H}\left(s/N\right) - \bar{H}_1s/N}{(s/N)^2}(s/N)^2\vec{e} - sc_\infty(1 - \rho_\infty)/N\\
 &= -\vec{\pi} \Lambda \frac{\bar{H}_2}{2}(s/N)^2\vec{e} + o(N^{-2}) - sc_\infty /N^2\\
 &= -(s/N)^2\sum_{d=1}^D \pi_d\lambda_d h_{d2}/2 - sc_\infty /N^2  + o(N^{-2})
\end{align*}
since $\vec{\pi} \Lambda \bar{H}_1 \vec{e} = \sum_{d=1}^D \pi_d\lambda_d h_{d1} = \rho_\infty c_\infty$. Furthermore,
\begin{align*}
B_3(s/N) &= \vec{\pi} \left[\Lambda \bar{H}\left(s/N\right) - C s/N \right] R \left[\Lambda \bar{H}\left(s/N\right) - Cs/N \right]\vec{e} \\
&= \vec{\pi} (s/N)^2\left[\Lambda \frac{\bar{H}\left(s/N\right)}{(s/N)} - C\right] R \cdot \left[\Lambda \frac{\bar{H}\left(s/N\right)}{(s/N)} - C \right]\vec{e} \\
&= \vec{\pi} (s/N)^2 \left[\Lambda\bar{H}_1 - C \right] R \left[\Lambda \bar{H}_1 - C\right] \vec{e} + o(N^{-2}) \\
&= -\vec{\pi} (s/N)^2 \left[\Lambda\bar{H}_1 - C \right] (\vec{a} + \vec{r}) + o(N^{-2}) \\
&=  -\sum_d \pi_d\left[(a_d+r_d) (\lambda_d h_{d1} - c_d)\right](s/N)^2 + o(N^{-2})
\end{align*}
due to $ R \left[C - \Lambda \bar{H}_1\right] \vec{e} = \vec{a} + \vec{r}$, see Eqn. \eqref{eq:avec}. Under the heavy-traffic scaling, \[\vec{r} = c_\infty(1-\rho_\infty)R\vec{e} = c_\infty N^{-1}R\vec{e}\] is an $o(1)$ term.
Observe that 
\begin{align*}
 -&(B_2(s/N) + B_3(s/N)) \\
 &= (s/N)^2 \sum_{d=1}^D \pi_d[\lambda_d h_{d2}/2 + (a_d + o(1))(\lambda_dh_{d1} - c_d)] + sc_\infty /N^2 + o(N^{-2}).
\end{align*}

Rearranging Eqn. \eqref{eq:AB1B2} yields 
\begin{align*}
\varphi_1(s/N) &= \pi_1\frac{B_1(s/N)}{-(B_2(s/N) + B_3(s/N))} \\
 &= \pi_1 \frac{c_\infty s/N^2 + o(N^{-2})}{(s/N)^2 \sum_{d=1}^D \pi_d[\lambda_d h_{d2}/2 + a_d(\lambda_dh_{d1} - c_d)] + c_\infty s/N^2 + o(N^{-2}) } \\
  &= \pi_1 \frac{1 + o(1)}{1 + c_\infty^{-1} \sum_d \pi_d \left[\lambda_d h_{d2}/2 + a_d (\lambda_d h_{d1} - c_d)\right]s + o(1)}.
\end{align*}

Let $M$ be the desired mean stated in Theorem 3.3, that is
\[
M 
 := c_\infty^{-1}\sum_d \pi_d\left[\hat{\lambda}_d h_{d2}/2 + a_d (\hat{\lambda}_d h_{d1} - c_d)\right].
 \]
Then, 
taking the heavy-traffic limit,
\[
\lim_{N\to\infty} \vec{\varphi}(s/N) = \lim_{N\to\infty} \frac{\varphi_1(s/N)}{\pi_1}\vec{\pi} = \frac{\vec{\pi}}{1 + Ms}
\]
i.e. the LST $\vec{\varphi}(s)$ converges in distribution to the LST of an exponentially distributed random variable with mean $M$.


\begin{thebibliography}{9}

\bibitem{Altman}
E. Altman and K. Avrachenkov and U. Ayesta. A survey on discriminatory processor sharing.
Queueing Syst. {\bf 53}, 53-63 (2006).

\bibitem{Altman04}
E. Altman, T. Jimenez, D. Kofman. DPS queues with stationary ergodic service times and the performance of TCP in overload.  Proceedings of IEEE Infocom (2004).

\bibitem{Asm87}
S. Asmussen. The heavy traffic limit of a class of Markovian queueing models. Oper. Res. Lett., {\bf 6}(6), 301-306 (1987).

\bibitem{ASMU}
S. Asmussen. Applied Probability and Queues, 2nd ed. Springer, New York (2003).

\bibitem{AVRA}
K. Avrachenkov, U. Ayesta, P. Brown, R. Núñez-Queija. Discriminatory processor sharing revisited. Proceedings of IEEE Infocom, {\bf 2}, 784-795 (2005).


\bibitem{Billingsley}
P. Billingsley. Convergence of Probability Measures. Wiley,
New York (1999).
%


\bibitem{KELL}
J. Blom, O. Kella, M. Mandjes, H. Thorsdottir. Markov-modulated infinite-server queues with general service times. Queueing Syst. {\bf 76}(4), 403-424 (2014).

\bibitem{Borst06}
S.C. Borst, R. Núñez-Queija, A. Zwart. Sojourn time asymptotics in processor sharing queues. Queueing Syst., {\bf 53}, 31-51 (2006).

\bibitem{Borst05}
S.C. Borst, D. van Ooteghem, A. Zwart. Tail asymptotics for discriminatory processor sharing queues with heavy-tailed service requirements. Perform. Eval., {\bf 61}, 281-298 (2005).

\bibitem{boxma2001}
O. J. Boxma and I. A. Kurkova. The M/G/1 queue with two service speeds. Adv. in Appl. Probab., {\bf 33}(2), 520-540 (2001).

\bibitem{Budh14}
A. Budhiraja, A. Ghosh, X. Liu. Scheduling control for Markov-modulated single-server multiclass queueing systems in heavy traffic. Queueing Syst. {\bf 78}(1), 57-97 (2014).

\bibitem{Dimi11}
M. Dimitrov. Single-server queueing system with Markov-modulated arrivals and service times. Pliska Stud. Math. Bulgar. {\bf 20}, 53–62 (2011).

\bibitem{Dorsman}
J.L. Dorsman, M. Vlasiou, B. Zwart. Heavy-traffic asymptotics for networks of parallel queues
with Markov-modulated service speeds. Queueing Systems: Theory and Applications, {\bf 79}(3-4), 293-319, (2015).


\bibitem{Grishechkin92} 
S. Grishechkin.
On a relationship between processor sharing queues and Crump-Mode-Jagers branching processes. Adv. Appl. Prob., {\bf 24}(3), 653-698 (1992).

\bibitem{FalinFalin}
G. Falin and A. Falin. Heavy traffic analysis of M/G/1 type queueing systems with Markov-modulated arrivals. Sociedad de Estadistica e Investigacion Operativa, {\bf 7}(2), 279-291 (1999).

\bibitem{Fayolle80}
G. Fayolle, I. Mitrani, R. Iasnogorodski. Sharing a Processor Among Many Job Classes. J. ACM, {\bf 27}(3), 519-532, (1980).

\bibitem{Fiems13}
D. Fiems, E. Altman.
Markov-modulated stochastic recursive equations with applications to delay-tolerant networks. Perform. Eval., {\bf{70}}(11), 965-980 (2013). 

\bibitem{Hassin03}
R. Hassin, M. Haviv. To queue or not to queue: equilibrium behavior in queueing systems. Kluwer Academic Publishers, Boston etc. (2003).


\bibitem{Izagirre15}
A. Izagirre, U. Ayesta, I.M. Verloop. Interpolation approximations for the steady-state distribution in multi-class resource-sharing systems. Perform. Eval., {\bf 91}, 56-79 (2015).

\bibitem{Ayesta11}
A. Izagirre, U. Ayesta, I.M. Verloop. Heavy-traffic analysis of a multi-class queue with relative priorities. Probab. Eng. Inform. Sc., 29(2), 153-180 (2015).

\bibitem{kang2009}
W. N. Kang, F. P. Kelly, N. H. Lee, R. J. Williams. State space collapse and diffusion approximation for a network operating under a fair bandwidth sharing policy. Ann. Appl. Probab., {\bf 19}(5), 1719–1780 (2009).

\bibitem{Katsuda}
T. Katsuda. State-space collapse in stationarity and its application to a multiclass single-server queue in heavy traffic. Queueing Syst., {\bf 65}(3), 237-273 (2010). 

 \bibitem{Kingman}
 J.F.C. Kingman. The single server queue in heavy traffic. Proc. Cambridge Philos. Vol. 57, No. 4, pp. 902--904.

\bibitem{Kim15}
B. Kim, J. Kim.
A single server queue with Markov modulated service rates and impatient customers. Perform. Eval., {\bf 83-84}, 1-15 (2015).

\bibitem{Klein67}
L. Kleinrock. Time-shared systems:  A theoretical treatment. Journal of the ACM, {\bf 14}(2), 242-261 (1967).


\bibitem{Mahab05}
S. Mahabhashyam and N. Gautam. On Queues with Markov Modulated Service Rates.
Queueing Syst., {\bf 51}(1-2), 89-113 (2005).


\bibitem{Neuts81}
M.F.Neuts. Matrix-geometric Solutions in stochastic models, an algorithmic approach. The Johns Hopkins University Press, Baltimore and London (1981).

\bibitem{Nunez00} 
R. Núñez-Queija. Processor-sharing models for integrated-services networks. PhD thesis, Eindhoven University of Technology (2000).

\bibitem{Purdue}
P. Purdue. The M/M/1 queue in a Markovian environment. Oper. Res., {\bf 22}, 562–569, (1974).

\bibitem{Rege96}
K.M. Rege and B. Sengupta. Queue-Length Distribution for the Discriminatory Processor-Sharing Queue. Oper. Res., {\bf 44}(4), 653-657, (1996).

\bibitem{Regter}
G.J.K. Regterschot and J.H.A. de Smit. The queue M/G/1 with Markov-modulated arrivals and service. Math. Opns. Res., {\bf 11}, 465-483 (1986).

\bibitem{Takine05}
T. Takine. Single-Server Queues with Markov-Modulated Arrivals and Service Speed. Queueing Syst. {\bf 49}, 7-22 (2005).


\bibitem{Verloop11}
I.M. Verloop, U. Ayesta, R. Núñez-Queija. Heavy-Traffic Analysis of a Multiple-Phase Network with Discriminatory Processor Sharing. Oper. Res., {\bf 59}(3), 648-660, (2011).

\bibitem{Wu12}
Y. Wu, L. Bui, R. Johari. Heavy traffic approximation of equilibria in resource sharing games. IEEE Journal on Selected Areas in Communications, {\bf 30}(11), 2200-2209, (2012).

\bibitem{YechNaor}
U. Yechiali and P. Naor. Queueing problems with heterogeneous arrival and service.
Oper. Res., {\bf 19}, 722-734 (1971).

\bibitem{Zorine14}
A.V. Zorine. On ergodicity conditions in a polling model with Markov modulated input and state-dependent routing.
Queueing Syst., {\bf{76}}, 223-241 (2014).

\end{thebibliography}
\end{document}